\numberwithin{equation}{section}
\theoremstyle{plain}
\newtheorem{theorem}[equation]{Theorem}
\newtheorem*{theorem*}{Theorem}
\newtheorem{lemma}[equation]{Lemma}
\newtheorem{lemdef}[equation]{Lemma and definition}
\newtheorem{proposition}[equation]{Proposition}
\newtheorem{corollary}[equation]{Corollary}
\theoremstyle{definition}
\newtheorem{definition}[equation]{Definition}
\newtheorem{observation}[equation]{Observation}
\theoremstyle{remark}
\newtheorem{remark}[equation]{Remark}
\newtheorem{example}[equation]{Example}
\newcommand{\LTr}{\ell}
\newcommand*{\defeq}{\mathrel{\vcentcolon=}}
\newcommand{\Z}{\mathbb Z}
\newcommand{\C}{\mathbb C}
\newcommand*{\base}[1][G]{{#1}^{(0)}}
\newcommand{\nb}{\nobreakdash}
\newcommand{\7}{\backslash}
\newcommand{\inverse}{^{-1}}
\newcommand{\supp}{\textup{supp}}
\newcommand{\iso}{\simeq}
\newcommand{\FGd}{\mathrm{\Pi}_1}
\newcommand{\FGp}{\mathrm{\pi}_1}
\newcommand{\Contc}{\mathrm{C_c}}
\newcommand{\Contz}{\mathrm{C_0}}
\newcommand{\PathS}[1]{\mathrm{P}\!#1}
\newcommand*{\dd}{\mathrm d}
\newcommand*{\Cst}{\textup C^*}
\title[Haar systems on the fundamental groupoid]{Topological
  fundamental groupoid. III. Haar systems on the fundamental groupoid}
\author{Rohit Dilip Holkar} \email{rohit.d.holkar@gamil.com}
\author{Md Amir Hossain}
\email{mdamir18@iiserb.ac.in}
\address{Department
	of Mathematics, Indian Institute of Science Education and Research
	Bhopal, Bhopal Bypass Road, Bhauri, Bhopal 462 066, Madhya Pradesh,
	India.}
      \keywords{fundamental groupoid, haar system}
      \thanks{\emph{Subject class.} 22A22, 28C10, 28C15}
\begin{document}
\maketitle{}

\begin{abstract}
  Let $X$ be a path connected, locally path connected and semilocally
  simply connected space; let $\tilde{X}$ be its universal cover. We
  discuss the existence and description of a Haar system on the
  fundamental groupoid $\Pi_1(X)$ of $X$. The existence of a Haar
  system on $\Pi_1(X)$ is justified when $X$ is a second countable,
  locally compact and Hausdorff. We provide equivalent criteria for
  the existence of the Haar system on a locally compact (locally
  Hausdorff) fundamental groupoid in terms of certain measures on $X$
  and $\tilde{X}$.  $\mathrm{C}^*(\Pi_1(X))$ is described using a
  result of Muhly, Renault and Williams. Finally, two formulae for
  the Haar system on $\Pi_1(X)$ in terms of measures on $X$ or
  $\tilde{X}$ are given.
\end{abstract}

\tableofcontents{}

\section*{Introduction}
\label{sec:introduction}

This is the last instalment in the series of articles discussing the
fundamental groupoid. In the first
article,~\cite{Holkar-Hossain2023Top-FGd-I}, we equip the fundamental
groupoids of \emph{nice} spaces with a canonical topology. Then we
discuss what conditions on the nice space~\(X\) make its fundamental
groupoid~\(\FGd(X)\) locally compact or Hausdorff or second countable.
In the current article, we discuss the Haar system on the fundamental
groupoid.

In order to study the \(\Cst\)\nb-algebra of the fundamental groupoid,
a Haar system is a necessary ingredient. However, even experts may
notice the absence of any description of the Haar system on a locally
compact fundamental groupoid in the literature on locally compact
groupoid and \(\Cst\)\nb-algebras.  Nonetheless, in~\cite[page
19]{Muhly-Renault-Williams1987Gpd-equivalence}, Muhly, Renault and
Williams mention that the structure of the \(\Cst\)\nb-algebra of a
fundamental groupoid seems known to \emph{cognoscenti}. In fact, their
Theorem~3.1 in~\cite{Muhly-Renault-Williams1987Gpd-equivalence} gives
a description of the \(\Cst\)\nb-algebra of a fundamental
groupoid. One may guess that the experts might be alluding to
geometric situations---for example, the fundamental groupoid of
certain manifolds or Lie groups---wherein the Haar system show up
naturally.

In a relatively recent article, Williams' give results about the
existence of the Haar system on equivalent
groupoid~\cite{Williams2016Haar-systems-on-equivalent-gpds}. This
result and the fact that the fundamental group and groupoid of a space
are equivalent,~\cite{Muhly-Renault-Williams1987Gpd-equivalence},
implies that a locally compact, Hausdorff and second countable
fundamental groupoid carries a Haar system. We express this
observation as a formal result in
Corollary~\ref{cor:sec-cble-gpd-Haar-sym}.

Going beyond the second countable Hausdorff case, we discuss the
existence of Haar systems the fundamental groupoids that are locally
compact and locally Hausdorff; Corollary~\ref{cor:Haar-sys-general}
provides two criteria for the existence of this Haar system. These
criteria are given in terms of existence of certain measures on the
space~\(X\) itself or its universal covering space~\(\tilde{X}\). We
provide examples, Example~\ref{exa:Haar-system-general}, supporting
this general case of Corollary~\ref{cor:Haar-sys-general}.

In case of locally compact spaces, the first example is of a
topological group. Let \(X\) be a locally path connected, semilocally
simply connected and locally compact \emph{group}. We know that the
fundamental groupoid of~\(X\) a transformation groupoid for a group
action; this is known from the first article of this series or see
Example~\ref{exa:fgd-of-gp}. Now, \(\FGd(X)\) being a groupoid of
transformation for a group action, the existence of Haar system on it
(Theorem~\ref{thm:Haar-measure-Gpd-of-Gp}) can be easily seen.

After locally compact group, assume that~\(X\) is a locally compact
\emph{nice} space so that~\(\FGd(X)\) is topological.
Corollary~\ref{cor:Haar-sys-general} gives two equivalent conditions
for the existence of a Haar system on~\(\FGd(X)\);
Corollary~\ref{cor:Haar-sys-general} follows from
Theorem~\ref{thm:Haar-sys-general}. What is the purpose of this
theorem? As fundamental groupoids are prototypical examples of locally
trivial, we could apply the proofs written for fundamental groupoids
word-to-word for locally trivial
groupoids. Theorem~\ref{thm:Haar-sys-general} is a consequence of this
generalisation.

The next two heuristics lay behind
Corollary~\ref{cor:Haar-sys-general}: firstly that as the covering
spaces and the fundamental groupoid of~\(X\) can be intrinsically
built from~\(X\) itself (to be precise, using the paths in~\(X\)), we
expect that the Haar systems on~\(\FGd(X)\) should also be derivable
from the measures on~\(X\). Secondly, as the fundamental groupoid is
formed by \emph{pasting} the universal covering spaces (as fibres over
the range map), one may expect to express the Haar system
on~\(\FGd(X)\) in terms of the measures on the universal covering
space~\(\tilde{X}\).

The last heuristics also leads to formulae for the Haar system
on~\(\FGd(X)\) using the measures on~\(\tilde{X}\)
(Equation~\eqref{eq:measure-lambda}) and~\(X\)
(Equation~\eqref{eq:nu-to-haar-system}). Proof of
Equation~\eqref{eq:measure-lambda} is not straightforward as the idea
behind. We express~\(\FGd(X)\) as a certain quotient of product of the
universal covering spaces (\cite[Section
3.1]{Holkar-Hossain2023Top-FGd-I}) using the fact that it is
equivalence to the fundamental group. And the some standard technical
computation are involved.

Once we equip the fundamental groupoid with a Haar system, a result of
Muhly, Renault and
Williams~\cite[Theorem~3.1]{Muhly-Renault-Williams1987Gpd-equivalence}
describes~\(\Cst(\FGd(X))\), see Proposition~\ref{prop:cst-of-fgd}.

\medskip

\paragraph{\emph{Organisation of the article:}}

Section~\ref{sec:prilim} contains preliminaries: topological
groupoids, their actions, their equivalences and the Haar systems on
them. This section also contains basic definitions and collection of
required results about continuous families of measures.

Section~\ref{sec:haar-system} contains the main results. Its
Subsection~\ref{sec:case-second-count} discusses the Haar system on a
locally compact, Hausdorff and second countable locally trivial
groupoid. The second Subsection~\ref{sec:non-second-countable}
discusses the general case of locally compact locally trivial
groupoids. Finally, Section~\ref{sec:formulae-haar-system} contains
description of the Haar system on~\(\FGd(X)\) in terms of certain
measures on~\(X\) or the universal cover of~\(X\).

\section{Preliminaries}
\label{sec:prilim}

Most of the content of Sections~\ref{sec:topol-group}
and~\ref{sec:actions-groupoids} is \emph{copied} from the second
article of this series.

\subsection{Topological groupoids}
\label{sec:topol-group}

We continue to follow the conventions established in the earlier
articles~\cite{Holkar-Hossain2023Top-FGd-I}~\cite{Holkar-Hossain2023Top-FGd-II}.
Nonetheless, here is a quick recap of notation about groupoids: for
us, a \emph{groupoid}~\(G\) is a small category in which every arrow
is invertible. We abuse the notation and consider the set of
units~\(\base\) a subset of~\(G\). It is standard exercise that for an
element \(\gamma \in G\), \(\gamma \inverse \gamma = s(\gamma)\) and
\(\gamma \gamma \inverse = r(\gamma)\) are the range and source
of~\(\gamma\). The fibre product
\(G\times_{s, \base, r} G = \{(\gamma, \eta) \in G\times G : s(\gamma)
=r(\eta)\}\) is called the set of all composable pairs of \(G\) and
it's denoted by \(G^{(2)}\).

The groupoid~\(G\) is called \emph{topological} if it carries a
topology in which the source map~\(s\colon G\to \base\), range
map~\(r\colon G\to \base\), the inversion
map~\(\mathrm{inv}\colon G\to G\) and the
multiplication~\(m\colon G^{(2)} \to G\) are continuous; here the
space of units is given the subspace topology, and
\(G^{(2)}\subseteq G\times G\) carries the subspace topology.

The topological groupoid~\(G\) is called locally compact (or Hausdorff
or second countable) if the topology is locally compact (respectively,
Hausdorff or second countable) plus the space of units is
Hausdorff. For us locally compact spaces are not necessarily
Hausdorff, see~\cite{Holkar-Hossain2023Top-FGd-I}. However, unless the
reader is bothered about non-Hausdorff case, they may simply consider
locally compact as locally compact Hausdorff. Second countable or
paracompact spaces are assumed to be Hausdorff. We refer the reader to
Renault's book~\cite{Renault1980Gpd-Cst-Alg} and Tu's
article~\cite{Tu2004NonHausdorff-gpd-proper-actions-and-K} for basics
of locally compact groupoids and their actions.

Given units \(x, y\in \base\), we define the following closed subspace
of~\(G\):
\[
  G^x \defeq r^{-1}(x),\quad G_y \defeq s^{-1}(y)\quad \text{ and }
  \quad G^x_y \defeq G^x\cap G_y.
\]
In fact, \(G^x_x\) is a topological group called the \emph{isotropy
  group} at~\(x\). In general, for sets~\(A,B\subseteq \base\), we
define
\[
  G^A \defeq r^{-1}(A),\quad G_B \defeq s^{-1}(B)\quad \text{ and }
  \quad G^A_B \defeq G^A\cap G_B.
\]

Following are some examples of groupoids we shall require later.

\begin{example}[Groupoid of trivial equivalence relation]
  \label{exa:gpd-triv-equi}
  Let \(X\) be a space and \(X\times X\) the trivial equivalence
  relation equipped with the product topology. Then \(X\times X\) is
  also a groupoid: the space of units is the diagonal
  \(\{(x,x) : x \in X\}\subseteq X\times X\) which we identify
  with~\(X\). The range and source maps are the projections onto first
  and second factors, respectively. The composite of arrows
  \((x,y)(y,z) = (x,z)\) for \(x,y,z\in X\).
\end{example}

Next we discuss the fundamental groupoid. Consider a space~\(X\). For
a set~\(U\subseteq X\), we write~\(\PathS{U}\) for the set of all
paths in~\(U\). For a path~\(\gamma \in \PathS{X}\), \(\gamma(0)\) is
called the initial or starting point and~\(\gamma(1)\) the terminal or
end point of~\(\gamma\). For~\(\gamma\) as before, \(\gamma^-\)
denotes the path \emph{opposite} to~\(\gamma\). The concatenation of
paths is denoted by~\(\oblong\). The fundamental groupoid of~\(X\) is
denoted by~\(\FGd(X)\). The fundamental group of~\(X\) at~\(x\in X\)
is denoted by~\(\FGp(X,x)\). We shall use the fact that~\(\FGd(X)\) is
the quotient of~\(\PathS{X}\) by the equivalence relation of endpoint
fixing path homotopy.  If \(X\) path connected, we simply
write~\(\FGp(X)\) instead of~\(\FGp(X,x)\).  Finally, since we direct
the arrows in a groupoid from \emph{right to left}, we shall think
that a path starts from right and ends on left in oppose to the
standard convention.
      
      \begin{example}[The fundamental groupoid]\label{exa:fund-gpd}
        Let \(X\) be a locally path connected and semilocally simply
        connected space. Then, we prove in the preceding
        article~\cite{Holkar-Hossain2023Top-FGd-I}, that the
        fundamental groupoid~\(\FGd(X)\) of~\(X\) can be equipped with
        a topology so that it becomes a topological groupoid. Equip
        the set of all paths, \(\PathS{X}\), in~\(X\) with the
        compact-open topology. Then the quotient topology
        on~\(\FGd(X)\) induced by the compact-open topology make the
        fundamental groupoid topological~\cite[Theorem
        2.8]{Holkar-Hossain2023Top-FGd-I}. We call this quotient
        topology the \emph{CO' topology}.

        There is another natural way to topologise~\(\FGd(X)\) as
        follows. For path connected and relatively inessential open
        sets~\(U,V\in X\) and a path~\(\gamma\) in~\(X\) starting at a
        point in~\(V\) and ending at a point in~\(U\), define for the
        following subset of~\(\FGd(X)\)
        \[
          N([\gamma], U,V) \defeq \{[\delta \oblong \gamma \oblong
          \omega] : \delta\in \PathS{U} \text{ with } \delta(0) =
          \gamma(1), \text{ and } \omega\in \PathS{V} \text{ with
          }\omega(1) = \gamma(0)\}.
        \]
        Then the sets of above form a basis for a topology
        on~\(\FGd(X)\) which we call \emph{the UC
          topology}. Proposition~2.4
        in~\cite{Holkar-Hossain2023Top-FGd-I} shows that for a locally
        path connected and semilocally simply connected space~\(X\),
        the UC and CO' topologies on the fundamental groupoid are
        same.

        In the groupoid~\(\FGd(X)\), the range and source maps (which
        are basically the evaluations at~\(1\) and~\(0\),
        respectively) are open~\cite[Corollay
        2.7]{Holkar-Hossain2023Top-FGd-I}. The space of
        units~\(\base[\FGd(X)]\) consists of constant paths and can be
        identified with~\(X\)~\cite[Corollary
        2.9(1)]{Holkar-Hossain2023Top-FGd-I}. Assume that~\(X\) is
        also path connected. Then, for a unit~\(x\in X\), the
        fibres~\(\FGd(X)_x\) or~\(\FGd(X)^x\) can be identified with
        the simply connected covering space of~\(X\) (constructed
        using either the paths starting at~\(x\) or ending
        at~\(x\))~\cite[Corollary~2.9(2)]{Holkar-Hossain2023Top-FGd-I}. Moreover,
        the isotropy group \(\FGd(X)_x^x\) at~\(x\) is basically the
        fundamental group of~\(X\), and it is discrete
        ~\cite[Corollary~2.9(3)]{Holkar-Hossain2023Top-FGd-I}.

        The fundamental groupoid is Hausdorff (or locally compact or
        second countable) \emph{iff} the underlying spaces is
        so~\cite[Section 3]{Holkar-Hossain2023Top-FGd-I}.
      \end{example}

      \begin{example}[Fundamental groupoid of a group]
        \label{exa:fgd-of-gp}
        Assume that~\(X\) is a path connected, locally path connected
        and semilocally simply connected topological
        \emph{group}. Let~\(H\) be its covering group with the group
        homomorphism~\(p\colon H\to X\) as the covering
        map. Then~\(H\) acts on~\(X\) through~\(p\). Theorem~2.21
        in~\cite{Holkar-Hossain2023Top-FGd-I} prove that the
        topological fundamental groupoid~\(\FGd(X)\) is isomorphic to
        the transformation groupoid~\(H \ltimes X\) of above action
        of~\(H\) on~\(X\).
      \end{example}

      \subsection{Locally trivial groupoids}
      \label{sec:locally-triv-group}

      A topological groupoid~\(G\) is called \emph{locally trivial} if
      for given~\(x\in \base[G]\), the restriction of source
      map~\(s|_{G^x}\colon G^x\to \base[G]\) is a (surjective) local
      homeomorphism. As the inversion is a homeomorphism of~\(G\), we
      may equivalently demand that~\(r|_{G_x}\) is a local
      homeomorphism.

      The subspaces \(G_x\) or \(G^x\) of \(G\) are also called
      \emph{transversals},
      see~\cite{Muhly-Renault-Williams1987Gpd-equivalence}.

      \begin{observation}
        \label{obs:discrete-isotropy-loc-triv-gpd}
        Note that since \(s|_{G_x}\) is a local homeomorphism for a
        locally trivial groupoid~\(G\), the isotropy
        \(G_x^x = s|_{G_x}\inverse(\{x\})\) is discrete.
      \end{observation}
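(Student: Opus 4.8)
The statement is essentially immediate once one recalls that the fibres of a local homeomorphism are discrete, so the plan is short. First I would make the identification explicit: since \(G_x = s\inverse(x)\), every arrow in \(G_x\) has source \(x\), and therefore
\[
  G_x^x \;=\; G^x \cap G_x \;=\; \{\gamma \in G_x : r(\gamma) = x\} \;=\; \bigl(r|_{G_x}\bigr)\inverse(\{x\}),
\]
i.e.\ the isotropy group at \(x\) is exactly one fibre of the map \(r|_{G_x}\colon G_x \to \base[G]\). (In the statement this map is written \(s|_{G_x}\); the two are carried onto one another by the inversion homeomorphism of \(G\), so this changes nothing.) By the definition of local triviality, together with the remark that \(\mathrm{inv}\) is a homeomorphism of \(G\), the map \(r|_{G_x}\) is a local homeomorphism.

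It then remains to invoke the elementary fact: if \(f\colon Y \to Z\) is a local homeomorphism and \(z \in Z\), then \(f\inverse(z)\) is discrete in the subspace topology. To see this I would fix \(y \in f\inverse(z)\) and choose an open \(U \ni y\) such that \(f|_U\) is a homeomorphism onto an open subset of \(Z\); in particular \(f|_U\) is injective, so \(U \cap f\inverse(z) = \{y\}\). Hence \(\{y\}\) is open in \(f\inverse(z)\), and since \(y\) was arbitrary, \(f\inverse(z)\) is discrete. Applying this with \(f = r|_{G_x}\) and \(z = x\) yields that \(G_x^x\) is discrete.

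There is no genuine obstacle here; the only two points that deserve a line of care are the set-theoretic identification of \(G_x^x\) with a single fibre of \(r|_{G_x}\) and the harmless \(s \leftrightarrow r\) transcription in the statement, both handled above. If one prefers a fully intrinsic phrasing, the same argument can be run using a bisection (an open set on which both \(s\) and \(r\) restrict to homeomorphisms) through a point of \(G_x^x\), but the fibre-of-a-local-homeomorphism formulation is the most economical.
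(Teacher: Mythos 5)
Your argument is correct and is exactly the paper's (one-line) justification: the isotropy is a single fibre of the local homeomorphism furnished by local triviality, and fibres of local homeomorphisms are discrete; you merely spell out the elementary fibre fact that the Observation leaves implicit. Your handling of the notational slip is also right --- the map in the statement should be read as \(s|_{G^x}\) (equivalently \(r|_{G_x}\), via inversion), since \(s|_{G_x}\) is constant on \(G_x\) and is not the map the definition of local triviality refers to.
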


      A groupoid~\(G\) is called \emph{transitive} if
      given~\(x,y\in\base[G]\), there is an arrow~\(\gamma\in G\) with
      \(x=r(\gamma)\) and \(y=s(\gamma)\). Clearly, a locally trivial
      groupoid is transitive.

      \begin{example}[See~\cite{Holkar-Hossain2023Top-FGd-I}]
        \label{exa:fgd-loc-triv}
        Let \(X\) be a path connected, locally path connected and
        semilocally simply connected space. Then its fundamental
        groupoid~\(\FGd(X)\) is locally trivial. For \(x\in X\),
        \(\FGd(X)^x\) is homeomorphic to the simply connected covering
        space of~\(X\) and \(s|_{\FGd(X)^x}\colon \tilde{X} \to X\) is
        basically the covering map. The left multiplication action of
        the isotropy~\(\FGp(X,x)\) on~\(\FGd(X)^x\) is the action by
        deck transformation.
      \end{example}

      \subsection{Actions of groupoids}
      \label{sec:actions-groupoids}

\begin{definition}
  Let \(G\) be a locally compact, Hausdorff groupoid, and let \(X\) be
  a topological space with a continuous momentum map
  \(r_X\colon X \to \base\). We call \(X\) is a left \(G\)\nb-space
  (or \(G\) act on \(X\) from left) if there is a continuous map
  \( \sigma \colon G \times_{s, \base, r_X} X \to X\) satisfying the
  following conditions:
  \begin{enumerate}
  \item \( \sigma (r_X(x), x) = x\) for all \(x\in X\);
  \item if \((\gamma, \eta) \in G^{(2)}\) and
    \((\eta, x) \in G \times_{s, \base, r_X} X \), then
    \((\gamma\eta, x), (\gamma, \sigma(\eta, x) ) \in G \times_{s,
      \base, r_X} X \) and
    \(\sigma ((\gamma \eta) , x) = \sigma (\gamma ,\sigma(\eta, x))\).
  \end{enumerate}
\end{definition}
      
We shall abuse the notation \(\sigma (\gamma, x)\) and simply write
\(\gamma \cdot x\) or \(\gamma x\). We shall often say `\(X\) is a
left (or right) \(G\)\nb-space' for a groupoid~\(G\); here it will be
\emph{tacitly assumed} that~\(r_X\) (respectively, left) is the
momentum map. The range (or source) map is the momentum map for the
left (respectively, right) multiplication action of a groupoid on
itself.

A groupoid~\(G\) acts on its space of units, from left, as follows:
for \(\gamma\in G\) and \(x\in G\), the action is defined if
\(s(\gamma) = x\) and is given by~\(\gamma x = r(\gamma)\). The
identity map on \(\base\) is the momentum map for this
action. Similarly a right action of~\(G\) on~\(\base\) is defined.

Given \(G\)\nb-spaces \(X\) and \(Y\), by an \emph{equivariant map} we
mean a function \(f\colon X\to Y\) such that \(r_Y\circ f = r_X\) and
\(f(\gamma x) = \gamma f(x)\) for all composable pairs
\((\gamma,x)\in G\times_{s, \base, r_X} X\).

\begin{example}[Transformation groupoid]
  \label{exa:transormation-gpd}
  For a continuous (right) action of a groupoid~\(G\) on a
  space~\(X\), one can construct the transformation groupoid which is
  denoted by~\(X\rtimes G\). The underlying space of the groupoid is
  the fibre product~\(X\times_{s_X, \base[G], r} G\); two elements
  \((x,g)\) and \((y,t)\) in~\(X\rtimes G\) are composable iff
  \(y= x\cdot g~\&~(g,t) \in G^{(2)}\), and the composition is given
  by \((x,g)(y,t) \defeq (x, gt)\); the inverse of~\((x,g)\) is given
  by \((x,g)^{-1} \defeq (x\cdot g, g^{-1})\).  For a left
  \(G\)\nb-space \(Y\), the transformation groupoid is defined
  similarly and is denoted by~\(G\ltimes Y\).
\end{example}

Assume that \(X\) is a \(G\)\nb-space for a groupoid~\(G\). While
studying the groupoid actions, the following map
\begin{equation}\label{eq:kine}
  a\colon G\times_{s, \base[G], r_X} X \to X\times X, \quad a\colon
  (\gamma, x)  \mapsto (\gamma x, x)
\end{equation}
turns out useful. Although, this map does not have a standard name,
for the current article we call it \emph{the kinetics}\footnote{A
  better name is welcome!} or \emph{the map of the kinetics} of the
action. Observation~2.10 in~\cite{Holkar-Hossain2023Top-FGd-I} shows
that the map of kinetics for the action of~\(\FGd(X)\) on its space of
units is a local homeomorphism where~\(X\) is a locally path connected
and semilocally simply connected space.

Recall from~\cite[Section 1.2]{Holkar-Hossain2023Top-FGd-I}, that an
action of~\(G\) on~\(X\) is \emph{free} iff the map of kinetics of the
action is one-to-one. And the action is \emph{proper} iff the map of
the kinetics is a proper map.

A groupoid~\(G\) is called proper if its action on the space of units
is proper.

\subsection{Equivalence of groupoids}
\label{sec:equiv-group}

\begin{definition}[\cite{Muhly-Renault-Williams1987Gpd-equivalence}]
  \label{def:gpd-equi}
  Let \(G\) and \(H\) be locally compact groupoids. An equivalence
  between~\(G\) and~\(H\) is a space~\(X\) carrying a left
  \(G\)\nb-action, right~\(H\)\nb-action and following holds:
  \begin{enumerate}
  \item the actions commute, that is,
    \((\gamma x)\eta = \gamma (x\eta)\) for appropriate
    \(\gamma \in G, x\in X\) and~\(\eta\in H\).
  \item The action of \(G\) on~\(X\) is free and proper.
  \item The action of \(H\) on~\(X\) is free and proper.
  \item The momentum map~\(s_X\) induces a
    homeomorphism~\(G\7 X \to \base[H]\).
  \item The momentum map~\(r_X\) induces a
    homeomorphism~\(X/H \to \base[G]\).
  \end{enumerate}
\end{definition}

\begin{remark}
  Note that if \(X\) implements an equivalence between \(G\) and
  \(H\), then the restriction of the map of kinetics
  \[
    k\colon G\times_{s, \base[G], r_X} X \to X\times_{s_X, \base[H],
      s_X} X
  \]
  obtained by restricting the range is a
  homeomorphism. Definition~\ref{def:gpd-equi}(2) implies that this
  map is a continuous bijection onto a closed subspace
  of~\(X\times X\); the fourth condition in
  Definition~\ref{def:gpd-equi} implies that the fibre product
  \(X\times_{s_X, \base[H], s_X} X\) is the range of this map;
  finally, since~\(k\) is proper, it is also closed.
\end{remark}

We shall need the following well-known fact:

\begin{lemma}[{\cite[Lemma
    2.38]{Williams2019A-Toolkit-Gpd-algebra}}] \label{lem:equi-gpd-construction}
  Let \(G\) and \(H\) be locally compact groupoids and~\(X\) be an
  equivalence between them. Then~\(H\) is isomorphic to the
  topological groupoid~\(G\7 (X\times_{r_X, \base[G], r_X} X)\).
\end{lemma}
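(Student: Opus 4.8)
The plan is to exhibit an explicit homeomorphic isomorphism of groupoids \(\bar\phi\colon G\7 Z \to H\), where \(Z \defeq X\times_{r_X,\base[G],r_X}X\) carries the diagonal left \(G\)\nb-action \(\gamma\cdot(x,y)\defeq(\gamma x,\gamma y)\), which is legitimate because \(r_X(\gamma x)=r(\gamma)=r_X(\gamma y)\). We regard \(G\7 Z\) with its standard imprimitivity groupoid structure: unit space \(G\7 X\), range and source \(r[x,y]=[x]\) and \(s[x,y]=[y]\), multiplication induced by \((x,y)\,(y,w)=(x,w)\), and inversion \([x,y]^{-1}=[y,x]\); freeness of the \(G\)\nb-action, Definition~\ref{def:gpd-equi}(2), is what makes the multiplication well defined on \(G\)\nb-orbits.

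The first step is to build \(\phi\colon Z\to H\) pointwise. For \((x,y)\in Z\) we have \(r_X(x)=r_X(y)\), so by Definition~\ref{def:gpd-equi}(5) the points \(x,y\) lie in one \(H\)\nb-orbit, and by freeness of the \(H\)\nb-action, Definition~\ref{def:gpd-equi}(3), there is a \emph{unique} \(\eta(x,y)\in H\) with \(x\cdot\eta(x,y)=y\); then necessarily \(r(\eta(x,y))=s_X(x)\) and \(s(\eta(x,y))=s_X(y)\). Since the two actions commute, \((\gamma x)\cdot\eta(x,y)=\gamma\bigl(x\cdot\eta(x,y)\bigr)=\gamma y\), so uniqueness forces \(\eta(\gamma x,\gamma y)=\eta(x,y)\); hence \(\phi(x,y)\defeq\eta(x,y)\) is \(G\)\nb-invariant and descends to a map \(\bar\phi\colon G\7 Z\to H\). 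That \(\bar\phi\) is an algebraic isomorphism of groupoids is then routine: the relation \(x\cdot\bigl(\eta(x,y)\,\eta(y,w)\bigr)=w\) forces \(\eta(x,w)=\eta(x,y)\,\eta(y,w)\), one has \(\eta(x,x)=s_X(x)\) and \(\eta(y,x)=\eta(x,y)^{-1}\), and on units \(\bar\phi\) restricts to the homeomorphism \(G\7 X\to\base[H]\) of Definition~\ref{def:gpd-equi}(4); surjectivity uses Definition~\ref{def:gpd-equi}(4) once more to choose, for a given \(\eta\in H\), a point \(x\) with \(s_X(x)=r(\eta)\), whereupon \(\bar\phi[x,\,x\eta]=\eta\); and injectivity holds because \(s_X(x)=s_X(x')\) forces \(x'=\gamma x\) for a unique \(\gamma\in G\), so \(\eta(x,y)=\eta(x',y')\) implies \([x,y]=[x',y']\).

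The substantive point is continuity of \(\bar\phi\) and of \(\bar\phi^{-1}\), for which I would invoke the Remark preceding this lemma in its form for the right \(H\)\nb-action (using Definition~\ref{def:gpd-equi}(3) and~(5) in place of~(2) and~(4)): the map of kinetics \(a_H\colon X\times_{s_X,\base[H],r}H\to X\times X\), \((x,\eta)\mapsto(x\cdot\eta,\,x)\), is a continuous injective proper map, hence a homeomorphism onto its range, which is exactly the closed subspace \(Z\). Precomposing \(a_H^{-1}\) with the coordinate flip of \(Z\) and postcomposing with the projection onto \(H\) recovers \(\phi\); since \(G\7 Z\) carries the quotient topology, \(\bar\phi\) is continuous. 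For \(\bar\phi^{-1}\), the continuous map \(\Psi\colon X\times_{s_X,\base[H],r}H\to G\7 Z\), \((x,\eta)\mapsto[x,\,x\eta]\) --- namely \(a_H\), then the coordinate flip, then the quotient map --- depends only on \(\eta\) and equals \(\bar\phi^{-1}(\eta)\), so it factors as \(\Psi=\bar\phi^{-1}\circ\mathrm{pr}_H\) through the projection \(\mathrm{pr}_H\colon X\times_{s_X,\base[H],r}H\to H\); as \(\mathrm{pr}_H\) is a continuous open surjection --- \(s_X\) being open --- it is a topological quotient map, and therefore \(\bar\phi^{-1}\) is continuous.

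The step I expect to need most care is this last one, the continuity of \(\bar\phi^{-1}\): one must confirm that the range of \(a_H\) is genuinely the fibre product over \(r_X\) --- this is where Definition~\ref{def:gpd-equi}(5) enters, exactly as in the cited Remark --- and that \(s_X\) is open so that \(\mathrm{pr}_H\) is a quotient map; openness of the momentum maps is a standard feature of the groupoid-equivalence framework and is automatic in our application to \(\FGd(X)\), where they are covering maps. The remaining verifications --- that the explicit formula for \(\bar\phi\) respects composition, units and inverses --- are straightforward, if a little lengthy.
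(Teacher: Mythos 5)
Your proof is correct and is essentially the paper's own route: the paper describes exactly this isomorphism, sending the class \([(x,y)]\) to the unique \(\eta\in H\) with \(x\eta=y\) (existence from Definition~\ref{def:gpd-equi}(5), uniqueness from freeness of the \(H\)\nb-action), and defers the verification to Williams' Lemma~2.38, which proceeds along the same lines as your algebraic checks and your continuity argument via the kinetics homeomorphism and the quotient map \(\mathrm{pr}_H\). The one ingredient you flag, openness of \(s_X\), is indeed part of the Muhly--Renault--Williams/Williams equivalence framework (and holds for \(\FGd(X)\), where the momentum maps are covering maps), so nothing essential is missing.
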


We briefly recall the quotient space and its groupoids structure
from~\cite{Muhly-Renault-Williams1987Gpd-equivalence}: the considered
action of~\(G\) on~\((X\times_{r_X, \base[G], r_X} X)\) is the
diagonal action, that is, for \(\gamma\in G\) and
appropriate~\((x,y)\in (X\times_{r_X, \base[G], r_X} X)\),
\(\gamma(x,y) = (\gamma x, \gamma y)\).  Let \([(x,y)]\) denote the
equivalence class of \((x,y)\in (X\times_{r_X, \base[G], r_X} X)\)
in~\(G\7(X\times_{r_X, \base[G], r_X} X)\). Denote the space of units
of \(G\7(X\times_{r_X, \base[G], r_X} X)\) is identified
with~\(G\7 X\). The range, source and inverse of
\([(x,y)]\in G\7 (X\times_{r_X, \base[G], r_X} X)\) are
respectively~\([x]\in G\7 X\) and~\([y]\in G\7X\) and \([(y,x)]\). Two
arrows \([(x,y)], [(w,z)]\in G\7 (X\times_{r_X, \base[G], r_X} X)\)
are composable \emph{iff} \(y=w\) and the
composition~\([(x,y)][(y,z)] = [(x,z)]\). All these operations are
well-defined.

Let \([(x,y)]\in G\7 (X\times_{r_X, \base[G], r_X} X)\). Then
Definition~\ref{def:gpd-equi}(4) assure existence of a unique arrow
\(\eta_{[(x,y)]} \in H\) such that \(x \eta_{[(x,y)]} = y\). The
assignment \([(x,y)] \mapsto \eta_{[(x,y)]}\) defines the isomorphism
\(G\7 (X\times_{r_X, \base[G], r_X} X) \to H\).

Above lemma is stated in~\cite[Lemma
2.38]{Williams2019A-Toolkit-Gpd-algebra} for \emph{Hausdorff
  groupoids}, however, one can check that the proof works for locally
Hausdorff, locally compact groupoids in general. Alternatively, the
lemma can be derived directly from~\cite[Lemma
4.5(d)]{Kumjian-Muhly-Renault1998Brauer-gp-of-lc-gpd}.

We need last lemma particularly in the following form: let
\(x\in\base[G]\) in a locally compact transitive
groupoid~\(G\). Then~\(G^x\) implements an equivalence of groupoids
between the isotropy~\(G_x^x\) and~\(G\)
itself~\cite{Muhly-Renault-Williams1987Gpd-equivalence}. As a
consequence~\(G\) can be identified with the
quotient~\(G^x_x\7(G^x\times G^x)\). In particular, this isomorphism
holds if~\(G\) is locally trivial.

\subsection{Families of measures}
\label{sec:measure-families}

By a measure on a locally compact space, we mean a positive Radon
measure. Such measures are exactly the positive linear
functional~\(\Contc(X)\to \C\) due to the Riesz
theorem~\cite{Rudin1987Real-And-Complex}. Therefore, if~\(\mu\) is
such a measure on a locally compact space~\(X\), then we also
write~\(\mu(f)\) for~\(\int_X f\,\dd\mu\). The support of~\(\mu\) is
smallest closed set~\(S\subseteq X\)
with~\(\mu(X\setminus S)=\emptyset\). We shall work with fully
supported measures, that is, the measures whose support is the whole
space. Here~\(\Contc(X)\) denotes the normed involutive algebra of
compactly supported continuous complex valued function on~\(X\)
equipped with the uniform norm.

Consider a homeomorphism \(\phi\colon X \to Y\) of locally compact
spaces. A measure~\(\mu\) on~\(Y\) induces the \emph{pullback}
measure~\(\phi^*\mu\) on~\(X\) which is defined by
\(\phi^*\mu(f) = \mu(f\circ \phi\inverse)\) for \(f\in \Contc(X)\).

Let \(X\) and~\(Y\) be locally, compact Hausdorff spaces and
\(\pi\colon X\to Y\) a continuous open surjection. By a
\emph{continuous family of measures along~\(\pi\)} we mean a family of
measures~\(\mu\defeq \{\mu_y\}_{y \in Y}\) such that for
each~\(y\in Y\), \(\mu_y\) is a measure on~\(X\) with supported
in~\(\pi\inverse(y)\), and for \(f\in \Contc(X)\), the following
complex valued function on~\(Y\) given by \( y \mapsto \mu_y(f)\), for
\(y\in Y\), is in~\(\Contc(Y)\).

We denote the above function \(y \mapsto \mu_y(f)\)
by~\(\mu(f)\). Thus \(\mu\colon \Contc(X) \to \Contc(Y)\) is a map of
complex vector spaces; \cite[Lemme
1.1]{Renault1985Representations-of-crossed-product-of-gpd-Cst-Alg}
says that~\(\mu\) is surjective.

If \(\nu\) is a measure on~\(Y\), then \(\nu\circ \mu\) is a measure
on~\(X\); for \(f\in \Contc(X)\), \(\nu\circ \mu(f) = \nu(\mu(f))\).

We frequently refer a continuous family of measures as a family of
measures.

\begin{example}
  \label{exa:local-homeo-family-of-measures}
  Let \(\pi\colon X\to Y\) be a surjective local
  homeomorphism. For~\(y\in Y\), define \(\mu_y\) to be the counting
  measure on~\(\pi\inverse(y)\). Then \(\mu\defeq \{\mu_y\}_{y\in Y}\)
  is a continuous family of measures along~\(\pi\). For
  \(f\in \Contc(X)\) and \(y\in Y\),
  \[
    \mu_y(f) = \sum_{x\in \pi\inverse(y)} f(x).
  \]
  In particular, if~\(f\) is supported in an open set~\(U\) such that
  \(\pi|_U\) is a homeomorphism, then
  \[
    \mu_y(f) =
    \begin{cases}
      f((\pi|_U)^{-1}(y)) & \text{ if } y\in \pi(U),\\
      0 & \text{ otherwise.}
    \end{cases}
  \]
\end{example}

\begin{example}
  \label{exa:fam-measures-along-quotient}
  Let \(X\) be a locally compact proper left \(G\)\nb-space for a
  locally compact groupoid~\(G\). Let \([x]\in G\7 X\) denote the
  equivalence class of~\(x\in X\). Then a Haar system~\(\alpha\)
  on~\(G\) induces a (continuous) family of
  measures~\(\tilde{\alpha}=\{\tilde{\alpha}_{[x]}\}_{[x]\in G\7 X}\)
  along the quotient map~\(q\colon X\to G\7 X\). This can be seen as a
  consequence of Lemma~1.3(i)
  in~\cite{Renault1985Representations-of-crossed-product-of-gpd-Cst-Alg}. This
  family of measures is essentially \emph{averaging} along the fibres:
  for \(g\in \Contc(X)\) and \([x]\in G\7 X\),
  \[
    \int_X g\, \dd\tilde{\alpha}_{[x]} \defeq \int_G g(\gamma\inverse
    x) \, \dd\alpha^{r_X(x)}(\gamma).
  \]
  The last integrand is well-defined (due to the properness of the
  \(G\)\nb-action) and does not depend on the choice of~\(y\in [x]\).
\end{example}

Assume that \(X\) and~\(Y\) above are \(G\)\nb-spaces for a
groupoid~\(G\) and \(\pi\colon X\to Y\) is a \(G\)\nb-equivariant open
mapping. Suppose a continuous family of
measures~\(\mu\defeq \{\mu_y\}_{y\in Y}\) along~\(\pi\) is given. We
call~\(\mu\)\emph{ \(G\)\nb-equivariant} if for each composable
pair~\((\gamma, y)\in G\times Y\),
\[
  \int_{\pi\inverse(\gamma y)} f(x) \, \dd\mu_{\gamma y}(x) =
  \int_{\pi\inverse(y)} f(\gamma\inverse x) \, \dd\mu_{y}(x)
\]
for \(f\in \Contc(X)\).

Note that if~\(G\) is a group, then a \(G\)\nb-equivariant family for
the constant map \(X\to \{*\}\) is essentially a \(G\)\nb-invariant
measure.

A \emph{Haar system} on a locally compact groupoid~\(G\) is a
continuous family of fully supported
measures~\(\alpha\defeq \{\alpha^u\}_{u\in \base}\) along the range
map~\(r\colon G\to \base\) that is also \(G\)\nb-equivariant of the
left multiplication action of~\(G\) on itself and the standard action
of~\(G\) on~\(\base\). Thus, the measure~\(\alpha^u\) is supported
on~\(G^u\) and for \(f\in \Contc(G)\) and \(\gamma\in G\),
\[
  \int_{G^{s(\gamma)}} f(\eta)\, \dd\alpha^{s(\gamma)}(\eta) =
  \int_{G^{r(\gamma)}} f(\gamma \eta)\, \dd\alpha^{r(\gamma)}(\eta).
\]

It is well-known that, unlike groups, a locally compact groupoid may
not have a Haar system.

\begin{example}\label{exa:haar-sys}
  \begin{enumerate}
  \item The Haar measure on a locally compact group is a Haar system.
  \item The transformation groupoid of group action, \(X \rtimes G\),
    in Example~\ref{exa:transormation-gpd} carries a Haar system. Let
    \(\lambda\) be the Haar measure on~\(G\); and for \(x\in X\), let
    \(\delta_x\) be the point mass at~\(x\) in~\(X\). For each
    \(x\in X\), define \(\alpha_x=\delta_x \times \lambda\). Then
    \(\{\alpha_x\}_{x\in X}\) is a Haar system on~\(X\rtimes G\).
  \item Let \(X\) be a locally compact space and \(X\times X\) be the
    groupoid of trivial equivalence on~\(X\), see
    Example~\ref{exa:gpd-triv-equi}. Assume that \(\lambda\) is fully
    supported measure on~\(X\). Then
    \(\{\delta_x\times\lambda\}_{x\in X}\) is a Haar system
    on~\(X\times X\).
  \end{enumerate}
\end{example}

Following are two results about Haar system that we shall need.

\begin{lemma}[Theorem 2.1~\cite{Williams2016Haar-systems-on-equivalent-gpds}]\label{lem:Williams-result}
  Let~\(G\) and~\(H\) be locally compact, Hausdorff and second
  countable \emph{equivalent} groupoids. If~\(H\) has a Haar system,
  then so does~\(G\).
\end{lemma}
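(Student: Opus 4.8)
The plan is to transport a Haar system from~$H$ to~$G$ across the equivalence. Write~$X$ for a $(G,H)$\nb-equivalence, with momentum map~$\rho\colon X\to\base[G]$ for the left $G$\nb-action and~$\sigma\colon X\to\base[H]$ for the right $H$\nb-action, and let~$\beta$ be a Haar system on~$H$. Applying Lemma~\ref{lem:equi-gpd-construction} with the roles of~$G$ and~$H$ exchanged identifies~$G$ with the quotient~$H\7\bigl(X\times_{\sigma,\base[H],\sigma}X\bigr)$ of the fibre product by the diagonal $H$\nb-action; under this identification~$\base[G]\iso H\7 X$, the range of~$[(x,y)]$ is~$[x]$, and, after fixing~$w$ in the $H$\nb-orbit~$\rho\inverse(u)$, the range fibre~$G^u$ is carried homeomorphically onto~$\sigma\inverse(\sigma(w))$ by~$\psi_w\colon b\mapsto[(w,b)]$ (injectivity uses that the $H$\nb-action on~$X$ is free, Definition~\ref{def:gpd-equi}(3)). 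For two choices~$w,w'\in\rho\inverse(u)$ the homeomorphisms~$\psi_w$ and~$\psi_{w'}$ differ by an $H$\nb-translation on the fibres of~$\sigma$.

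The first step is a reformulation: \emph{a Haar system on~$G$ is the same datum as a continuous, fully supported, $H$\nb-equivariant family of measures~$\mu=\{\mu_v\}_{v\in\base[H]}$ along~$\sigma$}, where $H$\nb-equivariance means that~$\mu_{s(\eta)}$ is the pushforward of~$\mu_{r(\eta)}$ along~$b\mapsto b\eta$ for every~$\eta\in H$. Indeed, given such a~$\mu$, put~$\alpha^u\defeq(\psi_w)_*\mu_{\sigma(w)}$ for any~$w\in\rho\inverse(u)$: the $H$\nb-equivariance of~$\mu$ makes this independent of~$w$, and a short computation shows that, read in the~$\psi$\nb-coordinates, left multiplication by~$\gamma=[(a,b)]$ (so that~$\sigma(a)=\sigma(b)$) is the identity map of~$\sigma\inverse(\sigma(b))$; therefore~$\{\alpha^u\}$ is automatically left\nb-invariant, that is, a Haar system. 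Conversely, a Haar system on~$G$ pulls back, along the maps~$\psi_w$, to such a family~$\mu$, left\nb-invariance ensuring that the pullback does not depend on~$w$; continuity and full support transfer in both directions.

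Thus everything reduces to producing a continuous, fully supported, $H$\nb-equivariant family of measures along~$\sigma$. Since~$\sigma$ induces a homeomorphism~$G\7X\iso\base[H]$ (Definition~\ref{def:gpd-equi}(4)), each fibre~$\sigma\inverse(v)$ is a $G$\nb-orbit in~$X$, and restricting the (free, proper) $H$\nb-action to the isotropy group~$H^v_v$ gives a free and proper action of~$H^v_v$ on~$\sigma\inverse(v)$; the sought family amounts to an~$H^v_v$\nb-invariant Radon measure on each~$\sigma\inverse(v)$, consistent along~$H$ and depending continuously on~$v$. When~$G$---equivalently~$H$---is locally trivial, which is the case of a fundamental groupoid, the isotropy~$H^v_v$ is discrete (Observation~\ref{obs:discrete-isotropy-loc-triv-gpd}), so the orbit map~$\sigma\inverse(v)\to\sigma\inverse(v)/H^v_v$ is a covering, and one simply pulls back a fully supported Radon measure from the base (which exists since the base is second countable, locally compact and Hausdorff); this already settles the case needed in this paper, and it does not even invoke~$\beta$---a discrete group has the counting measure as Haar measure. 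In the general case one uses second countability to choose a Borel cross\nb-section for the $H$\nb-action on~$X$, transports to it a Radon family of measures along~$\sigma$ (one exists by Renault's theorem on continuous families of measures~\cite{Renault1985Representations-of-crossed-product-of-gpd-Cst-Alg}, as~$\sigma$ is an open surjection of second countable locally compact Hausdorff spaces), and spreads it out using~$\beta$, properness of the action confining the spreading to a compact set of arrows so that the resulting measures stay Radon, while left\nb-invariance and full support of~$\beta$ account for the $H$\nb-equivariance and full support of the family.

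Two points remain, both routine but technical: that the family~$\mu$ obtained above is genuinely \emph{continuous} in~$v$ (the price of using a merely Borel cross\nb-section; one patches with a partition of unity over a locally finite open cover of~$X$ trivialising~$\sigma$), and that the transported family~$\{\alpha^u\}$ on~$G$ varies continuously with~$u$ (from continuity of~$\mu$ and the local structure of~$X\to\base[G]$). I expect the main obstacle to be the general\nb-case construction of the Radon $H$\nb-equivariant family along~$\sigma$: an $H$\nb-equivariant \emph{Borel} measure of this kind always exists, and the substance of the argument---following~\cite{Williams2016Haar-systems-on-equivalent-gpds}---is to upgrade it to a Radon, indeed continuous, one, which is exactly where second countability of the groupoids and the Haar system~$\beta$ on~$H$ are indispensable.
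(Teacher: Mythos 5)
You are attempting more than the paper does here: the paper offers no proof of this lemma at all, importing it verbatim as Theorem~2.1 of \cite{Williams2016Haar-systems-on-equivalent-gpds}, so the comparison can only be with Williams' argument. Your first reduction---that a Haar system on~$G$ is the same thing as a continuous, fully supported, $H$\nb-equivariant family of measures along $\sigma=s_X$---is correct, and your verification through the identification $G\iso (X\times_{s_X,\base[H],s_X}X)/H$ and the maps $\psi_w$ is sound; but note this is exactly Lemma~\ref{lem:Brauer-gp-result-Haar-system} (Kumjian--Muhly--Renault, Proposition~5.2), which the paper already quotes. Likewise your locally trivial shortcut is fine and indeed bypasses Williams, but it is essentially the paper's own route in Theorem~\ref{thm:Haar-sys-general} ((3)$\Rightarrow$(2)$\Rightarrow$(1)): it settles Corollary~\ref{cor:sec-cble-gpd-Haar-sym}, not the lemma as stated.

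The genuine gap is the general case, which is the entire content of Williams' theorem, and there your sketch does not go through as written. First, the existence of a fully supported \emph{continuous} family of measures along the open surjection~$\sigma$ is the crucial nontrivial input; it is not given by Lemme~1.1 of \cite{Renault1985Representations-of-crossed-product-of-gpd-Cst-Alg} (that lemma asserts surjectivity of $C_c(X)\to C_c(\base[H])$ for an \emph{already given} family), but is a theorem of Blanchard, and it is precisely where second countability is used in Williams' proof. Second, the ``choose a Borel cross-section and spread it out using~$\beta$'' step fails as stated: for a merely Borel, non-compact section~$T$, the set of arrows~$\eta$ with $T\eta$ meeting a fixed compact set need not be relatively compact, so properness does not ``confine the spreading'' and the averaging integrals can diverge; moreover a Borel section gives no handle on continuity of $v\mapsto\mu_v$ or on Radon-ness. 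The workable version averages a Blanchard $\sigma$\nb-system against~$\beta$ cut down by a Bruhat-type cutoff (as in Lemma~\ref{lem:exist-cut-off-function}), after which continuity in~$v$ and full support still require proof---exactly the points you defer. As it stands you have a correct argument in the locally trivial case plus a plan, rather than a proof, of Williams' theorem; for the paper's purposes the honest options are to cite \cite{Williams2016Haar-systems-on-equivalent-gpds} (as the authors do) or to restrict the claim to locally trivial groupoids, where your argument and Theorem~\ref{thm:Haar-sys-general} coincide.
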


\begin{lemma}[Proposition 5.2
  \cite{Kumjian-Muhly-Renault1998Brauer-gp-of-lc-gpd}]
  \label{lem:Brauer-gp-result-Haar-system}
  Let~\(G\) and~\(H\) be locally compact (and not necessarily
  Hausdorff or second countable) \emph{equivalent} groupoids. Suppose
  a locally compact \(G\)-\(H\)\nb-bispace \(X\) implements this
  equivalence. Then~\(G\) has a Haar system \emph{iff} the momentum
  map the the \(H\)\nb-action \(s_X \colon X \to \base[H]\) has an
  \(H\)\nb-invariant fully supported family measures.
\end{lemma}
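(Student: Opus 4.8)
The plan is to prove both implications by transporting the given data across the concrete picture of the equivalence supplied by Lemma~\ref{lem:equi-gpd-construction}.

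For the forward implication, suppose \(G\) carries a Haar system \(\alpha=\{\alpha^u\}_{u\in\base[G]}\). Since \(X\) is a free and proper left \(G\)-space with momentum map \(r_X\), Example~\ref{exa:fam-measures-along-quotient} produces a continuous family of measures \(\tilde\alpha=\{\tilde\alpha_{[x]}\}_{[x]\in G\7 X}\) along the quotient map \(X\to G\7 X\); composing with the homeomorphism \(G\7 X\to\base[H]\) induced by \(s_X\) (Definition~\ref{def:gpd-equi}(4)) turns this into a continuous family along \(s_X\). Two points then need checking. First, it is \emph{fully supported}: for \(x\in X\) the orbit map \(\gamma\mapsto\gamma\inverse x\) is a homeomorphism of \(G^{r_X(x)}\) onto the closed set \(Gx=s_X\inverse(s_X(x))\) (injectivity is freeness; the map is closed, hence a homeomorphism, by properness), and \(\tilde\alpha_{[x]}\) is the push-forward of the fully supported measure \(\alpha^{r_X(x)}\) along it. Second, it is \emph{\(H\)-invariant}: using that \(r_X\) is constant on \(H\)-orbits and that the \(G\)- and \(H\)-actions commute, the defining formula gives, for a composable pair \((x,\eta)\), \(\tilde\alpha_{s_X(x\eta)}(g)=\int_G g((\gamma\inverse x)\eta)\,\dd\alpha^{r_X(x)}(\gamma)=\tilde\alpha_{s_X(x)}\bigl(g(\,\cdot\,\eta)\bigr)\), which is exactly \(H\)-invariance. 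This is a one-line computation.

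For the converse, suppose \(s_X\colon X\to\base[H]\) carries an \(H\)-invariant fully supported continuous family \(\beta=\{\beta^v\}_{v\in\base[H]}\). Applying Lemma~\ref{lem:equi-gpd-construction} with the roles of \(G\) and \(H\) exchanged, identify \(G\) with \(H\7 Z\), where \(Z\defeq X\times_{s_X,\base[H],s_X}X\) carries the diagonal \(H\)-action, \(\base[G]=H\7 X\) via \(r_X\), range \([(x,y)]\mapsto[x]\), source \([(x,y)]\mapsto[y]\), and composition \([(x,y)][(y,z)]=[(x,z)]\). For \(u=[x]\in\base[G]\) the map \(\iota_x\colon y\mapsto[(x,y)]\) is a homeomorphism of \(s_X\inverse(s_X(x))\) onto \(G^u\) (freeness of the \(H\)-action gives injectivity, properness the homeomorphism), and I would set \(\alpha^u\defeq(\iota_x)_*\beta^{s_X(x)}\). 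The steps are then: (a) \emph{well-definedness} --- replacing \(x\) by \(\eta x\) precomposes \(\iota_x\) with a right translation, so independence of the representative is precisely the \(H\)-invariance of \(\beta\); (b) \emph{full support} --- immediate, since \(\iota_x\) is a homeomorphism and \(\beta^{s_X(x)}\) is fully supported on \(s_X\inverse(s_X(x))\); (c) \emph{left invariance} --- for \(\gamma=[(x,y)]\) one has \(s_X(x)=s_X(y)\), so \(G^{s(\gamma)}\) and \(G^{r(\gamma)}\) are parametrised by the \emph{same} fibre \(s_X\inverse(s_X(x))\), and left translation by \(\gamma\) is realised under \(\iota_y\) and \(\iota_x\) as the identity map of that fibre (because \([(x,y)][(y,z)]=[(x,z)]\)), so the Haar identity reduces to \(\beta^{s_X(x)}=\beta^{s_X(y)}\); (d) \emph{continuity} --- that \(u\mapsto\alpha^u(f)\) lies in \(\Contc(\base[G])\) for every \(f\in\Contc(G)\).

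I expect step (d) to be the main obstacle and the only genuinely technical point. The difficulty is that the quotient map \(p\colon Z\to G\) is open but not proper, so one cannot simply pull \(f\) back to a compactly supported function on \(Z\); instead one uses the standard machinery of continuous families of measures along quotients by free proper groupoid actions (as in the results cited from \cite{Renault1985Representations-of-crossed-product-of-gpd-Cst-Alg} and \cite{Kumjian-Muhly-Renault1998Brauer-gp-of-lc-gpd}): the assignment \(x\mapsto\delta_x\times\beta^{s_X(x)}\) is a continuous family along \(\mathrm{pr}_1\colon Z\to X\), it descends through the proper free \(H\)-action to a continuous family along \(p\), and combining this with \(r\circ p=q_X\circ\mathrm{pr}_1\) (where \(q_X\colon X\to\base[G]\) is the open surjection \(x\mapsto[x]\)) yields the continuity and compact support of \(u\mapsto\alpha^u(f)\). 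In the non-Hausdorff case the descent argument must be run with the \(\Contc\)-conventions of \cite{Holkar-Hossain2023Top-FGd-I}, using the local sections provided by openness of the maps in play, but no new idea is required.
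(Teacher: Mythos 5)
The paper offers no proof of this lemma: it is imported wholesale as Proposition~5.2 of \cite{Kumjian-Muhly-Renault1998Brauer-gp-of-lc-gpd}, so there is no internal argument to compare you with; what you have written is in effect a reconstruction of that cited result. Most of it is correct. The forward direction is fine: the family of Example~\ref{exa:fam-measures-along-quotient}, transported through the homeomorphism \(G\7 X\to\base[H]\), is continuous, and your checks of full support (each \(\tilde\alpha_{[x]}\) is the push-forward of \(\alpha^{r_X(x)}\) under the homeomorphism \(\gamma\mapsto\gamma\inverse x\) onto \(s_X\inverse(s_X(x))\), which is the orbit \(Gx\) by Definition~\ref{def:gpd-equi}(4)) and of \(H\)\nb-invariance (commutation of the actions plus \(r_X(x\eta)=r_X(x)\)) are exactly right. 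In the converse, steps (a)--(c) are also correct: with \(G\) identified with \((X\times_{s_X,\base[H],s_X}X)/H\) via Lemma~\ref{lem:equi-gpd-construction}, well-definedness of \(\alpha^{[x]}\defeq(\iota_x)_*\beta^{s_X(x)}\) is precisely the \(H\)\nb-invariance of \(\beta\), full support is immediate, and left invariance collapses to \(\beta^{s_X(x)}=\beta^{s_X(y)}\) because \(s_X(x)=s_X(y)\). (Minor slip: the action is a right one, so representatives change as \(x\mapsto x\eta\), not \(\eta x\).)

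The one genuine gap is in step (d), and it sits exactly where you appeal to ``standard machinery''. The descent results you gesture at --- Renault's Lemme~1.3(i), which is also what the paper itself uses in Example~\ref{exa:fam-measures-along-quotient} and Remark~\ref{rem:for-cont-of-lambda-G} --- push families through a quotient by \emph{averaging against a Haar system on the acting groupoid}. Here the acting groupoid is \(H\), and no Haar system on \(H\) is among the hypotheses; avoiding that assumption is the whole point of this lemma as opposed to Lemma~\ref{lem:Williams-result}, and extracting one for \(H\) from the given data would be circular. (Also, ``a continuous family along \(p\colon Z\to G\)'' is not the right object: such a family would consist of measures on the \(H\)\nb-orbits; what you need is a family along the induced map of quotients, i.e.\ along \(r\colon G\to\base[G]\).) The step can be repaired directly, with no descent lemma at all: since \(r_X\) induces an open continuous surjection \(X\to\base[G]\), it suffices to prove continuity of \(x\mapsto\int f([(x,z)])\,\dd\beta^{s_X(x)}(z)\) on \(X\). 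Writing \((x,z)=k(\gamma,z)\), where \(k\) is the homeomorphism of the remark following Definition~\ref{def:gpd-equi}, the integrand \((x,z)\mapsto f([(x,z)])\) is continuous, and for \(x\) ranging in a compact set \(C\) its support is contained in the compact set \(k\bigl(\{(\gamma,\gamma\inverse w):\gamma\in\supp f,\ w\in C,\ r(\gamma)=r_X(w)\}\bigr)\); so over \(C\) one may cut off and invoke only the continuity of \(\beta\) itself, while \(\supp(\alpha(f))\subseteq r(\supp f)\) gives compact support of \(u\mapsto\alpha^u(f)\). (In the locally Hausdorff case run this for \(f\) supported in a Hausdorff open subset, per the paper's conventions.) With that replacement your argument is complete and yields a self-contained proof of the quoted result.
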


We end the preliminaries by recalling the following lemma from
Bourbaki:

\begin{lemdef}[\cite{Bourbaki2004Integration-II-EN}, Lemma~1,
  Appendix~I]
  \label{lem:exist-cut-off-function}
  Let \(X\) be a locally compact Hausdorff space, \(R\) an open
  equivalence relation on \(X\), such that the quotient space \(X/R\)
  is paracompact. Let \(q \colon X \to X/R\) be the canonical quotient
  map. Then there is a real-valued continuous function \(e \geq 0\) on
  \(X\) such that:
  \begin{enumerate}
  \item \label{cond:cut-off-1} \(e\) is not identically zero on any
    equivalence for \(R\);
  \item \label{cond:cut-off-2} for every compact subset \(K\) of
    \(X/R\), the intersection \(q^{-1}(K) \cap \supp(e)\) is a compact
    set.
  \end{enumerate}
  We call the function~\(e\) a cutoff function for~\(q\).
\end{lemdef}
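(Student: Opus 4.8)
The plan is to manufacture $e$ from a partition of unity on the paracompact base $X/R$, lift it along $q$, and then cut it down to compact support over each piece using the local compactness of $X$. Since $R$ is open, the quotient map $q$ is open, so the $q$\nb-image of any open subset of $X$ is open in $X/R$. Cover $X$ by relatively compact open sets $U_x\ni x$ ($x\in X$); then $\{q(U_x)\}_{x\in X}$ is an open cover of $X/R$, and each $q(U_x)$ has compact closure because $\overline{q(U_x)}\subseteq q(\overline{U_x})$ and $q(\overline{U_x})$ is compact (and closed, as $X/R$ is Hausdorff). Since $X/R$ is paracompact Hausdorff, there is a locally finite partition of unity $\{\psi_j\}_{j\in J}$ subordinate to this cover; in particular each $\supp\psi_j$ is a compact subset of some $q(U_j)$, where $U_j\subseteq X$ is one of the chosen relatively compact open sets.

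Next I would produce, for each $j$, a compact set $L_j\subseteq U_j$ with $q(L_j)\supseteq\supp\psi_j$. For every $y\in\supp\psi_j$ choose $x_y\in U_j$ with $q(x_y)=y$ (possible since $y\in q(U_j)$) and a relatively compact open $N_y$ with $x_y\in N_y\subseteq\overline{N_y}\subseteq U_j$; the sets $q(N_y)$ are open and cover the compact set $\supp\psi_j$, so finitely many of them, say $q(N_{y_1}),\dots,q(N_{y_m})$, already cover it, and $L_j\defeq\overline{N_{y_1}}\cup\dots\cup\overline{N_{y_m}}$ works. By Urysohn's lemma for locally compact Hausdorff spaces, pick $g_j\in\Contc(X)$ with $0\le g_j\le 1$, $g_j\equiv 1$ on $L_j$ and $\supp g_j\subseteq U_j$. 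Then set
\[
  e\defeq\sum_{j\in J}(\psi_j\circ q)\,g_j .
\]

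Finally I would verify the assertions. The family $\{(\psi_j\circ q)g_j\}_{j\in J}$ is locally finite on $X$: the $j$\nb-th term vanishes off $q^{-1}(\supp\psi_j)$, and since $\{\supp\psi_j\}$ is locally finite on $X/R$, every point of $X$ has a neighbourhood of the form $q^{-1}(V)$ meeting only finitely many of the sets $q^{-1}(\supp\psi_j)$; hence the sum defines a continuous function and $e\ge 0$. For property~\ref{cond:cut-off-1}, given $x\in X$ some $\psi_{j_0}(q(x))>0$, so $q(x)\in\supp\psi_{j_0}\subseteq q(L_{j_0})$; choosing $z\in L_{j_0}$ with $q(z)=q(x)$ we get $e(z)\ge\psi_{j_0}(q(z))\,g_{j_0}(z)=\psi_{j_0}(q(x))>0$, while $z$ lies in the $R$\nb-class of $x$. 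For property~\ref{cond:cut-off-2}, let $K\subseteq X/R$ be compact; by local finiteness $F\defeq\{j\in J:\supp\psi_j\cap K\ne\emptyset\}$ is finite, and on $q^{-1}(K)$ every term with $j\notin F$ vanishes, so $q^{-1}(K)\cap\supp e\subseteq q^{-1}(K)\cap\bigcup_{j\in F}\supp g_j$; the latter set is closed and contained in the compact set $\bigcup_{j\in F}\supp g_j$, hence compact, and $q^{-1}(K)\cap\supp e$ is a closed subset of it.

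The main obstacle is the step producing the sets $L_j$: a partition of unity on $X/R$ pulled back to $X$ has saturated support, which need not be compact over a compact subset of $X/R$, so one must genuinely intersect with a compactly supported function on $X$; arranging such a function that is still positive on every fibre lying over $\supp\psi_j$ is exactly what the openness of $q$ (used to cover $\supp\psi_j$ by images of relatively compact open subsets of $X$) together with the local compactness of $X$ provides. The remainder is routine bookkeeping with locally finite families.
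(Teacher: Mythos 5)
Your construction is correct and is essentially the argument of the cited source (the paper itself offers no proof of this lemma, only the reference to Bourbaki): a locally finite partition of unity $\{\psi_j\}$ on the paracompact quotient, compact lifts $L_j\subseteq U_j$ of the supports obtained from openness of $q$ plus local compactness of $X$, Urysohn bumps $g_j$, and $e=\sum_j(\psi_j\circ q)g_j$. One small point in your verification of property (2): since $\supp e$ is a closure, a point of $q^{-1}(K)\cap\supp e$ could a priori be a limit of points outside $q^{-1}(K)$ where terms with $j\notin F$ are nonzero, so the inclusion $q^{-1}(K)\cap\supp e\subseteq\bigcup_{j\in F}\supp g_j$ should be justified by first noting $\supp e\subseteq\bigcup_j\bigl(q^{-1}(\supp\psi_j)\cap\supp g_j\bigr)$ (the closure of a locally finite union is the union of the closures) and then intersecting with $q^{-1}(K)$ to discard all $j\notin F$; with that one-line patch the proof is complete.
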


In particular, if \(p\colon \tilde{X} \to X\) is a covering map over a
locally path connected, semilocally simply connected and paracompact
space~\(X\), the~\(p\) has an associated cutoff function.

\section{A Haar system on a fundamental groupoid}
\label{sec:haar-system}

Although we aim to study the Haar system on fundamental groupoids, the
results in this section are worked out for locally trivial
groupoids. Fundamental groupoid being a locally trivial one, we do get
desired results. However, all the proofs go verbatim if one replaces a
locally trivial groupoid by~\(\FGd(X)\), and do the corresponding
identifications of subspaces and
maps. Corollaries~\ref{cor:sec-cble-gpd-Haar-sym},~\ref{cor:Haar-sys-general}
and Theorem~\ref{thm:Haar-measure-Gpd-of-Gp} are the key
results. Formulae in Section~\ref{sec:formulae-haar-system} shall
prove useful in computation.

\subsection{Case of second countable locally trivial groupoids}
\label{sec:case-second-count}

\begin{theorem}
  \label{thm:sec-cble-gpd-Haar-sym}
  Let \(G\) be a locally compact, Hausdorff, second countable and
  locally trivial groupoid. Then~\(G\) has a Haar system.
\end{theorem}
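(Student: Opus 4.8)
The plan is to reduce the statement to the trivial fact that a discrete group has a Haar system, and then transport this Haar system along a groupoid equivalence by invoking Williams' result, Lemma~\ref{lem:Williams-result}. Since \(G\) is locally trivial, it is in particular transitive, so I would begin by fixing a unit \(x\in\base[G]\) and passing to the transversal \(G^x\). By the discussion following Lemma~\ref{lem:equi-gpd-construction}, \(G^x\) implements an equivalence of groupoids between the isotropy group \(H\defeq G^x_x\) and \(G\) itself. Furthermore, Observation~\ref{obs:discrete-isotropy-loc-triv-gpd} tells us that \(H\) is discrete.

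Next I would check that all the standing hypotheses of Lemma~\ref{lem:Williams-result} are met. The groupoid \(G\) is locally compact, Hausdorff and second countable by assumption. The isotropy group \(H=G^x_x\) is a closed subspace of \(G\) (it equals \(r^{-1}(x)\cap s^{-1}(x)\)), hence it is locally compact and Hausdorff, and it inherits second countability from \(G\); being discrete and second countable, \(H\) is a \emph{countable} discrete group, so in particular a locally compact, Hausdorff, second countable group. (The transversal \(G^x\), again being a closed subspace of \(G\), is likewise locally compact, Hausdorff and second countable, so the equivalence is one between second countable locally compact Hausdorff groupoids, exactly as Williams' lemma requires.)

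Finally, the counting measure on the discrete group \(H\) is its Haar measure, and by Example~\ref{exa:haar-sys}(1) the Haar measure of a locally compact group constitutes a Haar system on that group viewed as a one-object groupoid. Thus \(H\) carries a Haar system, and applying Lemma~\ref{lem:Williams-result} to the equivalent pair \(G\), \(H\) (with \(H\) in the role of the groupoid that already has a Haar system) yields a Haar system on \(G\), which is the assertion of the theorem.

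I do not expect a serious obstacle here: the argument is an assembly of cited facts, and the only points that require a moment's care are that second countability of \(G\) is inherited by the isotropy group (forcing it to be countable discrete), and that the equivalence quoted from Lemma~\ref{lem:equi-gpd-construction} genuinely lives in the locally compact, Hausdorff, second countable setting so that Williams' hypotheses apply verbatim. As an alternative route that avoids second countability altogether, one could instead appeal to Lemma~\ref{lem:Brauer-gp-result-Haar-system}: the momentum map \(s_{G^x}\colon G^x\to\base[H]\) is a single point over which one simply puts the counting measure on \(H\), giving the required \(H\)\nb-invariant fully supported family of measures; but in the present second countable Hausdorff case the route through Lemma~\ref{lem:Williams-result} is the cleanest.
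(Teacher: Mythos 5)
Your proposal is correct and follows essentially the same route as the paper: fix a unit \(x\), observe the isotropy \(G^x_x\) is a countable discrete group carrying counting measure as Haar measure, note that \(G^x\) implements an equivalence between \(G^x_x\) and \(G\), and conclude via Williams' result (Lemma~\ref{lem:Williams-result}). Your extra checks of the hypotheses and the aside on Lemma~\ref{lem:Brauer-gp-result-Haar-system} are fine but not needed beyond what the paper does.
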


\begin{proof}
  Fix \(x\in \base[G]\). Since~\(G\) is second countable, the
  isotropy~\(G_x^x\) is a countable discrete group; the counting
  measures is the Haar measures on~\(G_x^x\). As \(G^x\) establishes
  an equivalence between~\(G_x^x\) and~\(G\), current theorem follows
  from Williams' Theorem~\ref{lem:Williams-result}.
\end{proof}

\begin{corollary}
  \label{cor:sec-cble-gpd-Haar-sym}
  Let \(X\) be a path connected, locally path connected, semilocally
  simply connected, locally compact, Hausdorff and second countable
  space. The fundamental groupoid~\(\FGd(X)\) has a Haar system.
\end{corollary}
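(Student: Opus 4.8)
The plan is to deduce this corollary directly from Theorem~\ref{thm:sec-cble-gpd-Haar-sym} by verifying that $\FGd(X)$ satisfies all of its hypotheses. First I would recall, from Example~\ref{exa:fgd-loc-triv}, that the fundamental groupoid of a path connected, locally path connected and semilocally simply connected space is locally trivial; this uses nothing beyond the cited first article of the series. Next I would invoke the topological dictionary established there (Example~\ref{exa:fund-gpd}): since $X$ is Hausdorff, locally compact and second countable, the groupoid $\FGd(X)$ is Hausdorff, locally compact and second countable as well, because these properties of $\FGd(X)$ are equivalent to the corresponding properties of $X$.

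Having checked that $\FGd(X)$ is a locally compact, Hausdorff, second countable and locally trivial groupoid, the conclusion is immediate: Theorem~\ref{thm:sec-cble-gpd-Haar-sym} applies verbatim and yields a Haar system on $\FGd(X)$. For concreteness I would also spell out what the proof of that theorem produces in this case: fixing a base point $x\in X$, the isotropy group $\FGd(X)^x_x$ is the fundamental group $\FGp(X,x)$, which is discrete (Example~\ref{exa:fund-gpd}) and countable (since $X$ is second countable), so the counting measure is a Haar measure on it; the transversal $\FGd(X)^x$, which is homeomorphic to the universal cover $\tilde X$, implements an equivalence between $\FGp(X,x)$ and $\FGd(X)$, and Williams' Lemma~\ref{lem:Williams-result} transports the Haar system across this equivalence.

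There is essentially no obstacle here, since all the work has been done upstream: the local triviality of $\FGd(X)$ and the transfer of topological properties between $X$ and $\FGd(X)$ are quoted from the first article, and the passage from a Haar system on a discrete (countable) group to one on an equivalent second countable groupoid is exactly Lemma~\ref{lem:Williams-result}. The only point that deserves a word of care is the countability of $\FGp(X,x)$, which follows because a second countable space has only countably many path components lifting in the universal cover; equivalently, $\tilde X$ is second countable and the fibres of the covering map $\tilde X\to X$ are discrete, hence countable.
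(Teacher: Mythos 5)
Your proposal is correct and follows exactly the paper's own route: quote the transfer of local compactness, Hausdorffness and second countability from $X$ to $\FGd(X)$ (together with local triviality from Example~\ref{exa:fgd-loc-triv}) and then apply Theorem~\ref{thm:sec-cble-gpd-Haar-sym}. The extra unpacking of that theorem's proof (discrete countable isotropy $\FGp(X,x)$ with counting measure, equivalence via the transversal, Lemma~\ref{lem:Williams-result}) is consistent with the paper but not needed beyond what the paper's two-line proof already does.
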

\begin{proof}
  Since~\(X\) is locally compact, Hausdorff and second countable, so
  is~\(\FGd(X)\), see~\cite{Holkar-Hossain2023Top-FGd-I}. Now the
  result follows from last theorem.
\end{proof}

\subsection{The general case}
\label{sec:non-second-countable}

\begin{theorem}\label{thm:Haar-measure-Gpd-of-Gp}
  Let \(X\) be a path connected, locally path connected, semilocally
  simply connected and locally compact topological group. Then its
  fundamental groupoid~\(\FGd(X)\) has a Haar system.
\end{theorem}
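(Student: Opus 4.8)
The plan is to reduce the statement to the transformation-groupoid case, where a Haar system is essentially built by hand from a Haar measure. By Example~\ref{exa:fgd-of-gp}, if $H$ denotes the covering group of $X$ with covering homomorphism $p\colon H\to X$, then $\FGd(X)$ is isomorphic, as a topological groupoid, to the transformation groupoid $H\ltimes X$ for the action $h\cdot x = p(h)\,x$. Hence it suffices to exhibit a Haar system on $H\ltimes X$ and transport it along this isomorphism. But $H\ltimes X$ is the transformation groupoid of a \emph{group} action, so Example~\ref{exa:haar-sys}(2) already supplies such a system --- provided $H$ is a locally compact group and therefore carries a left Haar measure.

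So the steps are as follows. First I would check that $H$ is locally compact: the covering map $p$ is a local homeomorphism onto the locally compact space $X$, so $H$ is locally compact, and $H$ is Hausdorff whenever $X$ is (equivalently, whenever $\FGd(X)$ is, by Example~\ref{exa:fund-gpd}); being moreover a topological group, $H$ admits a left Haar measure $\lambda$. Next I would assemble the Haar system on $H\ltimes X$: with the conventions of Example~\ref{exa:transormation-gpd}, the range fibre of $H\ltimes X$ over a unit $x\in X$ is $\{(h, x') : p(h)\,x' = x\}$, which is in bijection with $H$, and one takes $\alpha^x$ to be the pushforward of $\lambda$ under this bijection --- concretely $\alpha^x \cong \lambda\times\delta_x$ after the obvious identification, exactly the formula of Example~\ref{exa:haar-sys}(2). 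Continuity of $x\mapsto \alpha^x(f)$ for $f\in\Contc(H\ltimes X)$ and the equivariance $\int f(\gamma\eta)\,\dd\alpha^{r(\gamma)}(\eta) = \int f(\eta)\,\dd\alpha^{s(\gamma)}(\eta)$ both follow from left-invariance of $\lambda$. Finally, pushing this family forward along the isomorphism $H\ltimes X \cong \FGd(X)$ of~\cite{Holkar-Hossain2023Top-FGd-I} yields a Haar system on $\FGd(X)$, since a topological isomorphism of groupoids carries Haar systems to Haar systems.

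I do not expect a genuine obstacle: everything reduces to the existence of a Haar measure on the locally compact group $H$ and to the elementary verification already recorded in Example~\ref{exa:haar-sys}(2). The only point deserving a sentence of care is the bookkeeping between the left action $h\cdot x = p(h)\,x$ and the right-action conventions under which Example~\ref{exa:transormation-gpd} and Example~\ref{exa:haar-sys}(2) are phrased; this is settled by passing to the opposite group of $H$ (or, equivalently, by replacing the left Haar measure $\lambda$ with a right one). Unwinding the explicit groupoid isomorphism of~\cite{Holkar-Hossain2023Top-FGd-I} to see precisely which constant paths correspond to which pairs $(h,x)$ is routine and involves no estimates.
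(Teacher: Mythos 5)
Your proposal is correct and follows the paper's own argument: identify $\FGd(X)$ with the transformation groupoid $H\ltimes X$ via Example~\ref{exa:fgd-of-gp}, note that $H$ is a locally compact group (so it has a Haar measure), and invoke Example~\ref{exa:haar-sys}(2) to obtain the Haar system. The extra care you take about local compactness of $H$ and the left/right convention bookkeeping only makes explicit what the paper leaves implicit.
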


\begin{proof}
  By Example~\ref{exa:fgd-of-gp} we know that~\(\FGd(X)\) is a
  transformation groupoid~\(H\ltimes X\); here~\(H\) is the universal
  covering space of~\(X\) which is also a locally compact
  group. Therefore, the Haar measure on~\(H\) induces a Haar system
  on~\(\FGd(X)\), see~Example~\ref{exa:haar-sys}(2).
\end{proof}

\begin{theorem}\label{thm:Haar-sys-general}
  Let \(G\) be a locally compact, locally trivial groupoid, and let
  \(x\in \base[G]\). Consider the following statements:
  \begin{enumerate}
  \item \(G\) has a Haar system;
  \item the transversal \(G^x\) has a \(G_x^x\)\nb-invariant fully
    supported Radon measure for the left multiplication action of
    \(G_x^x\) on~\(G^x\);
  \item \(\base[G]\) has a fully supported Radon measure.
  \end{enumerate}
  Then (1)\(\iff\)(2) and (3)\(\implies\)(2). If \(\base[G]\) is
  paracompact, then (2)\(\implies\)(3).
\end{theorem}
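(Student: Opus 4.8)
The plan is to establish the equivalences by routing everything through the equivalence bimodule~\(G^x\) and the two ``averaging'' constructions from the preliminaries. Recall from Section~\ref{sec:equiv-group} that~\(G^x\) implements an equivalence between the isotropy group~\(G_x^x\) and~\(G\); here the left \(G\)\nb-action on~\(G^x\) is by multiplication (with momentum map~\(r\)) and the right \(G_x^x\)\nb-action is also by multiplication. Since~\(G_x^x\) is a \emph{discrete} group (Observation~\ref{obs:discrete-isotropy-loc-triv-gpd}), the counting measure is its Haar measure, so the clause ``has a Haar system'' for~\(G_x^x\) is automatic and plays no role; what matters is the translation of Lemma~\ref{lem:Brauer-gp-result-Haar-system} into this setting.

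\smallskip

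\noindent\emph{(1)\(\iff\)(2).} Apply Lemma~\ref{lem:Brauer-gp-result-Haar-system} with the roles \(G \rightsquigarrow G_x^x\) and \(H \rightsquigarrow G\), and bimodule \(X \rightsquigarrow G^x\): the momentum map of the \(G\)\nb-action on~\(G^x\) is \(r|_{G^x}\), which is the source map of~\(G\) restricted, and in the bimodule picture the momentum map of the \emph{other} action \(s_X\colon G^x \to \base[G_x^x] = \{x\}\) is the constant map. The lemma then says \(G_x^x\) has a Haar system (true, trivially) \emph{iff} the constant map \(G^x \to \{x\}\) carries a \(G_x^x\)\nb-invariant fully supported family of measures --- but that is not what we want; we want the other direction. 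So instead I would apply the lemma symmetrically, i.e.\ in the form: \(G\) has a Haar system \emph{iff} the momentum map \(s_{G^x}\colon G^x \to \base[G_x^x]\) for the \(G_x^x\)\nb-action admits a \(G_x^x\)\nb-invariant fully supported family of measures along it. Since \(\base[G_x^x]\) is the one-point space~\(\{x\}\), a family of measures along \(G^x \to \{x\}\) is a single measure on~\(G^x\), the fully-supported condition is the same, and \(G_x^x\)\nb-invariance of this family is exactly invariance under the left multiplication action of~\(G_x^x\) on~\(G^x\) (using that the left and right multiplication actions of an isotropy group on a transversal are intertwined by inversion, or directly from the definition of equivariant family specialised to a group acting with constant momentum map). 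This gives (1)\(\iff\)(2) essentially verbatim from Lemma~\ref{lem:Brauer-gp-result-Haar-system}; the only care needed is checking that ``\(H\)\nb-invariant'' in that lemma unwinds to the stated left-multiplication invariance and that the group being discrete lets us drop its Haar-system hypothesis.

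\smallskip

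\noindent\emph{(3)\(\implies\)(2).} Given a fully supported Radon measure~\(\nu\) on~\(\base[G]\), I want to produce a \(G_x^x\)\nb-invariant fully supported measure on~\(G^x\). The map \(r|_{G^x}\colon G^x \to \base[G]\) is a surjective local homeomorphism (definition of locally trivial, via the inversion symmetry), so by Example~\ref{exa:local-homeo-family-of-measures} it carries the counting family \(\{\beta_u\}_{u\in\base[G]}\), \(\beta_u = \)~counting measure on \((r|_{G^x})\inverse(u) = G^x_u\). Then set \(\mu \defeq \nu \circ \beta\), the measure on~\(G^x\) obtained by integrating the counting fibre-measures against~\(\nu\) (this is a Radon measure by the discussion of \(\nu\circ\mu\) in Section~\ref{sec:measure-families}). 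It is fully supported because \(\nu\) is fully supported and each fibre is nonempty and discrete with the counting measure. For \(G_x^x\)\nb-invariance: the left \(G_x^x\)\nb-action on~\(G^x\) covers the identity on~\(\base[G]\) (since \(r(\gamma\eta) = r(\gamma)\) for \(\eta \in G_x^x\)), it permutes each fibre \(G^x_u\), and the counting measure on a set is invariant under bijections of that set; hence \(\mu\) is invariant. This step is short and essentially formal.

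\smallskip

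\noindent\emph{(2)\(\implies\)(3) when \(\base[G]\) is paracompact.} This is the step I expect to be the main obstacle --- it is the only place where paracompactness (hence a cutoff function) is genuinely used. Given a \(G_x^x\)\nb-invariant fully supported measure~\(\mu\) on~\(G^x\), I want to push it down to a fully supported measure on \(\base[G] \iso G_x^x \backslash G^x\) (the last identification because \(s|_{G^x}\colon G^x \to \base[G]\) is the quotient by the left \(G_x^x\)\nb-action --- indeed for a locally trivial groupoid the fibres of~\(s|_{G^x}\) are precisely the \(G_x^x\)\nb-orbits, and \(s|_{G^x}\) is open). Naively integrating out the orbit doesn't immediately land in \(\Contc\) of the base, so the device is a cutoff function: since \(\base[G]\) is locally compact Hausdorff and paracompact and the orbit equivalence relation is open with paracompact quotient, Lemma and Definition~\ref{lem:exist-cut-off-function} provides a continuous \(e \geq 0\) on~\(G^x\) that is not identically zero on any \(G_x^x\)\nb-orbit and has \(q\inverse(K) \cap \supp(e)\) compact for every compact \(K \subseteq \base[G]\), where \(q = s|_{G^x}\). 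Then I would \emph{define}, for \(f \in \Contc(\base[G])\),
\[
  \nu(f) \defeq \int_{G^x} (f \circ q)(\xi)\, e(\xi)\, \dd\mu(\xi).
\]
The integrand is compactly supported (its support lies in \(q\inverse(\supp f) \cap \supp e\), which is compact) and continuous, so \(\nu\) is a well-defined positive linear functional, i.e.\ a Radon measure on~\(\base[G]\). It is fully supported: if \(f \geq 0\) is nonzero at some \(u \in \base[G]\), pick \(\xi\) in the orbit \(q\inverse(u)\) with \(e(\xi) > 0\) (possible by condition~\eqref{cond:cut-off-1}); then \((f\circ q)\cdot e\) is nonzero and nonnegative near~\(\xi\), and since \(\mu\) is fully supported the integral is positive. (Invariance of~\(\mu\) under~\(G_x^x\) is not even needed for this direction, though it is harmless; one could alternatively phrase \(\nu\) as the genuine pushforward of \(e\,\dd\mu\) under~\(q\) and invoke the standard fact that cutoff-weighted pushforwards of fully supported invariant measures are fully supported.) The subtlety to watch is making sure the ``not identically zero on any orbit'' clause is exactly matched to ``fully supported on the quotient'', and that the local compactness/Hausdorff hypotheses of Lemma and Definition~\ref{lem:exist-cut-off-function} are met by~\(G^x\) --- it is locally compact since~\(G\) is, and Hausdorff provided we are in the Hausdorff case; in the merely locally Hausdorff setting one restricts to a Hausdorff neighbourhood or notes that transversals of locally trivial groupoids inherit enough separation, which should be remarked on.
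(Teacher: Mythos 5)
Your proposal is correct and follows essentially the same route as the paper: (1)\(\iff\)(2) by applying Lemma~\ref{lem:Brauer-gp-result-Haar-system} to the equivalence \(G^x\) (with the left/right bookkeeping handled by inversion, which the paper leaves implicit), (3)\(\implies\)(2) by composing \(\nu\) with the counting family along the transversal, and (2)\(\implies\)(3) via the Bourbaki cutoff function, where your use of the unnormalised \(e\) instead of the paper's normalised \(h\) is a harmless simplification since only existence of \(\nu\), not the identity \(\lambda=\nu\circ\mu\), is needed. Only correct the notation in your (3)\(\implies\)(2): the relevant surjective local homeomorphism is \(s|_{G^x}\), not \(r|_{G^x}\) (which is constant on \(G^x=r\inverse(x)\)), and the left \(G_x^x\)\nb-action preserves the source, \(s(\eta\gamma)=s(\gamma)\), which is exactly why it permutes each fibre \(G^x_u\).
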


Note that a \(G_x^x\)\nb-invariant measure~\(\lambda\) on \(G^x\) is
fully supported \emph{iff} \(\lambda\) is nonzero.

\begin{proof}[Proof of Theorem~\ref{thm:Haar-sys-general}]
  (1)\(\iff\)(2): The transversal~\(G^x\) implements equivalence
  between~\(G^x_x\) and~\(G\). Therefore, the claim follows from
  Lemma~\ref{lem:Brauer-gp-result-Haar-system}.

  \noindent (3)\(\implies\)(2): Assume that~\(\nu\) is a fully
  supported measure on~\(\base[G]\).
  Let~\(\mu\defeq \{\mu_x\}_{x\in \base[G]}\) be the family of
  counting measures along the local
  homeomorphism~\(s|_{G^x}\colon G^x \to \base[G]\). Then the measure
  \(\nu \circ \mu\) is a \(G_x^x\)\nb-invariant fully supported
  measure on~\(G^x\). Since~\(\mu\) and~\(\nu\) have full support, so
  does~\(\nu\circ \mu\). Also note that~\(\mu\) is invariant for the
  left multiplication action of~\(G_x^x\) on~\(G^x\). Now
  \(G_x^x\)\nb-invariance of~\(\nu\circ\mu\) follows from a standard
  computation, for example, see~Proposition~3.1(i)
  in~\cite{Holkar2017Composition-of-Corr}.

  \noindent (2)\(\implies\)(3): Assume that~\(\base[G]\) is
  paracompact, and let~\(\lambda\) be a \(G_x^x\)\nb-invariant measure
  on~\(G^x\) having full support.

  Let \(e\) be the cutoff function for the local
  homeomorphism~\(s|_{G^x}\colon G^x \to \base[G]\). Getting the
  measure~\(\nu\) on~\(\base[G]\) from~\(\lambda\) is a standard
  process, for example, see~Proposition~3.2(ii)
  in~\cite{Holkar2017Composition-of-Corr}. We describe this
  construction next.

  Since \(s|_{G^x}\inverse(z) \cap \supp(e)\) is a finite set for
  \(z\in \base[G]\), we define \(h\colon G^x \to [0,\infty)\) as
  \[
    h(\tilde{x}) = \frac{e(\tilde{x})}{\sum_{\tilde{y}\in
        (s|_{G^x})\inverse(s|_{G^x}(\tilde{x}))} e(\tilde{y})}
  \]
  for \(\tilde{x}\in G^x\).  Then \(h\) is also a cutoff
  function. Additionally, for \(\tilde{z}\in G^x\) and
  \(x\in \base[G]\),
  \[
    0\leq h(\tilde{z}) \leq 1 \quad \text{ and } \quad
    \sum_{\tilde{x}\in s|_{G^x}\inverse(x)} h(\tilde{x}) = 1.
  \]

  Now for \(f\in \Contc(\base)\), define the positive Radon
  measures~\(\nu\) as follows:
  \[
    \nu(f) \defeq \int_{G^x} f\circ s|_{G^x}(\tilde{x}) \cdot
    h(\tilde{x}) \, \dd\lambda(\tilde{x}).
  \]
  The fact that~\(\nu\) is a \(G_x^x\)\nb-invariant measure on~\(G^x\)
  is a special case
  of~\cite[Proposition~3.2(ii)]{Holkar2017Composition-of-Corr}. The
  measure~\(\nu\) has full support because \(\lambda\) has full
  support and the cutoff function~\(h\) does not identically vanish on
  any fibre \(s|_{G^x}\inverse(z)\) for \(z\in \base[G]\).

  In fact, \cite[Proposition~3.2(iii)]{Holkar2017Composition-of-Corr}
  says that, for given~\(\lambda\), this~\(\nu\) is the unique measure
  with the property that~\(\lambda = \nu\circ \mu\).
\end{proof}

\begin{corollary}[Corollary of Theorem~\ref{thm:Haar-sys-general}]\label{cor:Haar-sys-general}
  Consider the following statements for a locally compact, path
  connected, locally path connected and semilocally simply connected
  topological space~\(X\):
  \begin{enumerate}
  \item the fundamental groupoid \(\FGd(X)\) has a Haar system;
  \item the universal covering space \(\tilde{X}\) has a
    \(\FGp(X)\)\nb-invariant fully supported Radon measure;
  \item \(X\) has a fully supported Radon measure.
  \end{enumerate}
  Then (1)\(\iff\)(2) and (3)\(\implies\)(2). If \(X\) is paracompact,
  then (2)\(\implies\)(3).
\end{corollary}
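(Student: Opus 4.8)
The plan is to deduce Corollary~\ref{cor:Haar-sys-general} directly from Theorem~\ref{thm:Haar-sys-general} by applying the latter to the groupoid $G = \FGd(X)$ and then translating each of the three numbered statements through the identifications recorded in Example~\ref{exa:fgd-loc-triv}. First I would observe that $\FGd(X)$ is indeed a locally compact, locally trivial groupoid: local triviality is Example~\ref{exa:fgd-loc-triv}, and local compactness of $\FGd(X)$ follows from local compactness of $X$ by the cited result \cite{Holkar-Hossain2023Top-FGd-I} (recalled at the end of Example~\ref{exa:fund-gpd}). So Theorem~\ref{thm:Haar-sys-general} applies with any choice of basepoint $x\in X = \base[\FGd(X)]$.

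Next I would carry out the three dictionary translations. Statement~(1) is literally the same in both: ``$\FGd(X)$ has a Haar system''. For statement~(2): Example~\ref{exa:fgd-loc-triv} identifies the transversal $\FGd(X)^x$ with the universal (simply connected) covering space $\tilde X$ of $X$, and under this identification the left multiplication action of the isotropy group $\FGd(X)_x^x$ on $\FGd(X)^x$ becomes the action of $\FGp(X,x)\cong\FGp(X)$ on $\tilde X$ by deck transformations. Hence ``$G^x$ has a $G_x^x$-invariant fully supported Radon measure'' becomes ``$\tilde X$ has a $\FGp(X)$-invariant fully supported Radon measure'', which is statement~(2) of the corollary. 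For statement~(3): the unit space $\base[\FGd(X)]$ is identified with $X$ (constant paths, as in Example~\ref{exa:fund-gpd}), so ``$\base[G]$ has a fully supported Radon measure'' is exactly ``$X$ has a fully supported Radon measure''. Finally, paracompactness of $\base[G]$ is paracompactness of $X$.

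With these identifications in place, the implications (1)$\iff$(2), (3)$\implies$(2), and (under paracompactness of $X$) (2)$\implies$(3) are immediate transcriptions of the corresponding implications in Theorem~\ref{thm:Haar-sys-general}. I would also note in passing, using the remark following Theorem~\ref{thm:Haar-sys-general}, that a $\FGp(X)$-invariant measure on $\tilde X$ is fully supported precisely when it is nonzero, so statement~(2) could equivalently be phrased with ``nonzero'' in place of ``fully supported''.

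There is essentially no obstacle here: the whole content is bookkeeping, and the only point requiring any care is making sure the invocations of \cite{Holkar-Hossain2023Top-FGd-I} (for local triviality, for local compactness of $\FGd(X)$, for the identification of $\FGd(X)^x$ with $\tilde X$ together with its deck-transformation action, and for the identification of $\base[\FGd(X)]$ with $X$) are all legitimate — and all four are recorded verbatim in Examples~\ref{exa:fund-gpd} and~\ref{exa:fgd-loc-triv} of this paper. So the proof is just: ``Apply Theorem~\ref{thm:Haar-sys-general} to $G=\FGd(X)$ and use Examples~\ref{exa:fund-gpd} and~\ref{exa:fgd-loc-triv} to identify the three statements.''
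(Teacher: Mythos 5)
Your proposal is correct and matches the paper's intended argument exactly: the corollary is stated as an immediate consequence of Theorem~\ref{thm:Haar-sys-general} applied to $G=\FGd(X)$, with the transversal $\FGd(X)^x$ identified with $\tilde X$, the isotropy with $\FGp(X)$ acting by deck transformations, and the unit space with $X$, precisely as recorded in Examples~\ref{exa:fund-gpd} and~\ref{exa:fgd-loc-triv}. The bookkeeping you spell out is all there is to it.
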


We derive the following standard fact from last corollary:

\begin{corollary}
  Let \(X\) be a locally compact, path connected, locally path
  connected and semilocally simply connected topological group. Then
  the Haar measure on its simply connected cover~\(\tilde{X}\) is
  \(\FGp(X)\)\nb-invariant.
\end{corollary}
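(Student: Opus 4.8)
The plan is to feed the Haar measure of the group~\(X\) into Corollary~\ref{cor:Haar-sys-general} and then recognise the invariant measure that comes out as a Haar measure of~\(\tilde X\). First I would observe that, since~\(X\) is a locally compact group, it carries a left Haar measure, which is a fully supported Radon measure; so hypothesis~(3) of Corollary~\ref{cor:Haar-sys-general} holds with~\(\nu\) this Haar measure. Tracing the implication (3)\(\implies\)(2) in the proof of Theorem~\ref{thm:Haar-sys-general}, the associated \(\FGp(X)\)\nb-invariant fully supported measure on \(\tilde X\iso\FGd(X)^x\) is \(\nu\circ\mu\), where \(\mu\) is the family of counting measures along the covering map \(p\colon\tilde X\to X\); concretely \((\nu\circ\mu)(g)=\int_X\sum_{\tilde x\in p\inverse(x)}g(\tilde x)\,\dd\nu(x)\) for \(g\in\Contc(\tilde X)\).

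The second step is to identify \(\nu\circ\mu\) with the Haar measure of~\(\tilde X\). Here I would use that \(\tilde X\) is itself a locally compact topological group and \(p\) a covering homomorphism (Example~\ref{exa:fgd-of-gp} and~\cite{Holkar-Hossain2023Top-FGd-I}), so that ``the Haar measure on~\(\tilde X\)'' is meaningful. A one-line change of variables — substitute \(\tilde y=\tilde h\inverse\tilde x\) inside the fibre sum, then \(x'=p(\tilde h)\inverse x\) in the outer integral, and invoke left-invariance of~\(\nu\) on~\(X\) — shows \(\nu\circ\mu\) is invariant under left translation by every \(\tilde h\in\tilde X\). Thus \(\nu\circ\mu\) is a left Haar measure on~\(\tilde X\), hence a positive multiple of it, and since \(\FGp(X)\)\nb-invariance is scale-independent the Haar measure of~\(\tilde X\) is \(\FGp(X)\)\nb-invariant. (Centrality of \(\ker p\) in the connected group~\(\tilde X\) makes the left/right distinction irrelevant, so the statement holds for either Haar measure.)

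I would also record the more direct route, which is really the conceptual heart: by Example~\ref{exa:fgd-loc-triv} the \(\FGp(X)\)\nb-action on \(\tilde X\iso\FGd(X)^x\) is by deck transformations of~\(p\), and for a covering homomorphism these are precisely the left translations by elements of \(\ker p\), a discrete central subgroup isomorphic to \(\FGp(X)\); a left Haar measure is invariant under all left translations, so in particular under \(\ker p\). The only genuine subtlety — the ``main obstacle'', such as it is — is pinning down that the \(\FGp(X)\)\nb-action from the groupoid picture agrees with this deck-transformation/left-translation action (equivalently, that the measure produced by the theorem's construction is a \emph{Haar} measure and not merely \emph{some} invariant one); once that identification with a subgroup of left translations is in hand, left-invariance of Haar measure closes the argument immediately.
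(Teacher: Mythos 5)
Your argument is correct, but it reaches the conclusion along a different route from the paper's. The paper's proof is a one-liner: it combines Theorem~\ref{thm:Haar-measure-Gpd-of-Gp} (the Haar measure of the covering group \(H\iso\tilde X\) gives a Haar system on \(\FGd(X)\iso H\ltimes X\)) with the implication (1)\(\implies\)(2) of Theorem~\ref{thm:Haar-sys-general}, so that the \(\FGp(X)\)\nb-invariant fully supported measure on the transversal \(\FGd(X)^x\iso\tilde X\) is the one coming from that explicit Haar system, i.e.\ (implicitly, via the identification of Example~\ref{exa:fgd-of-gp}) the Haar measure of \(\tilde X\). You instead start downstairs: you feed the Haar measure \(\nu\) of \(X\) into (3)\(\implies\)(2), obtain \(\nu\circ\mu\), and then use your change-of-variables computation together with uniqueness of Haar measure to recognise \(\nu\circ\mu\) as a left Haar measure on \(\tilde X\); that computation is correct, and it has the pleasant by-product of exhibiting the Haar measure of \(\tilde X\) as \(\nu\circ\mu\), consistent with Equation~\eqref{eq:measure-lambda}. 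Your second, direct argument---the isotropy action on \(\FGd(X)^x\) is the deck action (Example~\ref{exa:fgd-loc-triv}), deck transformations of the covering homomorphism \(p\) are translations by the discrete central subgroup \(\ker p\iso\FGp(X)\), and Haar measure is translation invariant---is also valid and is the most elementary of the three routes. You are right to single out, as the only real subtlety, the identification of the groupoid-theoretic \(\FGp(X)\)\nb-action with translation by \(\ker p\); this is exactly the point that the paper's citation-style proof leaves implicit (Theorem~\ref{thm:Haar-sys-general}(1)\(\implies\)(2) by itself only produces \emph{some} invariant measure, and one must know which one to conclude that \emph{the Haar} measure is invariant). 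In short: the paper's route buys economy, reusing results already proved and staying inside the groupoid picture; your routes buy an explicit formula and a self-contained group-theoretic reason for the invariance.
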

\begin{proof}
  Consequence of Theorem~\ref{thm:Haar-measure-Gpd-of-Gp} and
  Theorem~\ref{thm:Haar-sys-general}(1)-(2).
\end{proof}

\begin{example} \label{exa:Haar-system-general} Consider a locally
  compact, path connected, locally path connected and semilocally
  simply connected space~\(X\).  Theorem~\ref{thm:Haar-sys-general}
  implies that in the following cases~\(\FGd(X)\) has Haar system.
  \begin{enumerate}
  \item If \(X\) is a homogeneous \(G\)\nb-space for some locally
    compact group~\(G\). In this case, it is well-known to experts
    that \(X\) carries a quasi-invariant measure
    (see~\cite{Folland1995Harmonic-analysis-book}) which has full
    support.
  \item Let \(X\) be a compact Hausdorff space and \(\sigma\) a
    homeomorphism of it. Then \(\Z\) acts on~\(X\) via this
    homeomorphism which gives the classical dynamical system
    \((X, \sigma)\). Assume that this action is transitive. Then~\(X\)
    has a \(\sigma\)\nb-invariant measure~\(\mu\) (\cite[Theorem
    VII.3.1]{Davidson1996Cst-by-Examples}). Due to the transitivity of
    the action, this is a measure on \(X\) with full support.
  \end{enumerate}
\end{example}

\subsection{Formulae for the Haar system}
\label{sec:formulae-haar-system}

Theorem~\ref{thm:Haar-sys-general} provides conditions of the
existence of the Haar system on~\(\FGd(X)\).  We can also describe the
Haar system concretely for a given measure on~\(\tilde{X}\) as in
Theorem~\ref{thm:Haar-sys-general}.  \smallskip

\subsubsection{Haar system in terms of measure on the transversal:}

Consider \(G\) as in Theorem~\ref{thm:Haar-sys-general}. Fix
\(x\in \base[G]\). Assume that a \(G_x^x\)\nb-invariant nonzero
measure \(\lambda\) on~\(G^x\) is given. Let \(\lambda_G\) be the Haar
system that~\(\lambda\) induces on~\(G\) in
Theorem~\ref{thm:Haar-sys-general}. We shall describe formula for
integration by~\(\lambda_G\) in terms of~\(\lambda\).

Following is the idea for finding this map: consider the commuting
Figure~\ref{fig:haar-measure-formula}.

\begin{figure}[htb]
  \centering
  \begin{tikzcd}
    G^x \ltimes G \arrow [rr, "k"] \arrow[dr, "p"]& & G^x \times G^x
    \arrow[dl, "q"]\\
    & G &
  \end{tikzcd}
  \caption{} \label{fig:haar-measure-formula}
\end{figure}

\noindent The maps in Figure~\ref{fig:haar-measure-formula} are as
follows: the transversal \(G^x\) is an equivalence between \(G^x_x\)
and \(G\); we consider the fibre product
\(G^x\times_{s|_{G^x}, \base[G], r} G\) as the transformation groupoid
\(G^x\rtimes G\). In Figure~\ref{fig:haar-measure-formula}, \(k\) is
the map of kinetics for the right multiplication action of~\(G\) on
\(G^x\). Thus
\[
  k\colon G^x\rtimes G \to G^x \times G^x, \quad k\colon (y,\gamma)
  \mapsto (y,y\gamma),
\]
where \((y,\gamma)\in G^x\rtimes G\). As \(G^x\) is an
\(G^x_x\)-\(G\)\nb-equivalence, this kinetics is an isomorphism of
topological groupoids when \(G^x\times G^x\) is considered as the
groupoid of the trivial equivalence relation. The inverse of~\(k\) is
given by
\[
  k\inverse(y,z) = (y,y\inverse z) \quad \text{ for } \quad (y,z)\in
  G^x\times G^x.
\]

In Figure~\ref{fig:haar-measure-formula}, the
map~\(p \colon G^x\rtimes G \to G\) is the projection onto the second
factor. And \(q\colon G^x \times G^x \to G\) is defined as
\(q(y,z) = y\inverse z\) for \((y,z) \in G^x\times G^x\). In
fact,~\(q\) is a quotient map: recall
Lemma~\ref{lem:equi-gpd-construction}. Applying this lemma to the
\(G_x^x\)-\(G\)\nb-equivalence \(G^x\), we see that
\(G_x^x\7 (G^x\times G^x) \iso G\) are groupoids. It can be checked
that~\(q\) is the quotient
map~\(G^x\times G^x\to G\iso G_x^x\7 (G^x\times G^x)\). One needs to
show that images of~\((y,z), (y',z')\in G^x\times G^x\) under~\(q\)
are same \emph{iff} \((y',z') = (\xi y, \xi z)\) for
some~\(\xi\in G_x^x\); we leave this as an easy exercise.

Finally, it is a direct computation that
Figure~\ref{fig:haar-measure-formula} commutes.

Our idea of getting the Haar system on~\(G\) is as follows: the given
measure~\(\lambda\) induces the Haar system
\(\lambda_2\ \defeq \{\delta_y \times \lambda\}_{y\in G^x}\) on the
groupoid~\(G^x \times G^x\) (cf. Example~\ref{exa:haar-sys}).  The
groupoid isomorphism \(k\), pulls back this Haar system to the Haar
system
\(k^*\lambda_2\defeq \{k^*(\delta_y \times \lambda)\}_{y\in G^x}\)
on~\(G^x\rtimes G\). Now, the point is that since \(G^x\) is the space
of units of the transformation groupoid~\(G^x\rtimes G\), the measures
in~\(k^*\lambda_2\) \emph{heuristically} reside on fibres
of~\(G\). Using the projection map~\(p\), we indent to figure out
these measures. And as it turns out, this plan works!

Let \(g\in \Contc(G)\) and \(w\in \base[G]\); our aim to describe
\(\int_G g\, \dd \lambda_G^w\). Firstly, we want to consider~\(g\) as
a function on the transformation groupoid~\(G^x\rtimes G\). For this,
we \emph{choose} \(\eta\in G_w^x\); such~\(\eta\) exists as~\(G\) is
transitive. Let \(\chi_{\{\eta\}}\) denote the characteristic function
of the set~\(\{\eta\}\). Then we consider the
function~\(\chi_{\{\eta\}}\times g\) on~\(G^x\rtimes G\). Clearly, the
restricted function \((\chi_{\{\eta\}}\times g)|_{r\inverse(\eta)}\)
is in~\(\Contc((G^x\rtimes G)^{\eta})\).  And, finally, we define
\[
  \int_G g \, \dd\lambda_G^w \defeq \int_G (\chi_{\{\eta\}}\times g)
  \,\dd k^*\lambda_2^y.
\]
Our next task is to simply the last formula.

By definition of the pullback measures, we can see that above formula
is essentially
\begin{multline*}
  \int_G g \, \dd\lambda_G^w = \int_G (\chi_{\{\eta\}}\times g)\circ
  k\inverse(y,z) \,\dd \lambda_2^y(y,z) \\= \int_G
  (\chi_{\{\eta\}}\times g)(y,y\inverse z) \,\dd \lambda(z) = \int_G
  g(\eta\inverse z) \,\dd \lambda(z).
\end{multline*}

\noindent To summarise, we \emph{claim} that for \(g\in \Contc(G)\)
and \(w\in \base[G]\),
\begin{equation}
  \label{eq:haar-measure-for-1}
  \int_G g \, \dd\lambda_G^w=  \int_{G^x}  g(\eta\inverse z)  \,\dd
  \lambda(z)
\end{equation}
where \(\eta\in G_w^x\) and the Haar system
\(\{\lambda_G^w\}_{w\in \base[G]}\) on~\(G\) induced by the
measure~\(\lambda\) on~\(G^x\). First of all, we need to check that
Equation~\eqref{eq:haar-measure-for-1} is independent of choice of
\(\eta\in G_w^x\). Choose another \(\xi\in G_w^x\). Then
\[
  \int_G g(\xi\inverse y) \,\dd \lambda(y) = \int_G
  g(\eta\inverse((\eta\xi\inverse) y)) \,\dd \lambda(y)
\]
where \(\eta\xi\inverse \in G_x^x\) as \(\eta,\xi\in G_w^x\). As
\(\lambda\) is \(G_x^x\)\nb-invariant, the last integral above equals
\[
  \int_G g(\eta\inverse y) \,\dd \lambda(y)
\]
proving that Equation~\eqref{eq:haar-measure-for-1} is well-defined.

\begin{remark}\label{rem:for-cont-of-lambda-G}
  Once again, recall that \(G^x\) is a
  \(G^x_x\)-\(G\)\nb-equivalence. And notice that, as~\(G\) is
  transitive, the quotient \(G^x_x\7 G^x\) for the left multiplication
  action of~\(G^x_x\) on the transversal~\(G^x\)
  is~\(\base[G]\). Therefore, if we quotient the
  \(G^x_x\)\nb-equivariant range map
  \(r \colon G^x\times G^x \to G^x\) by the left \(G^x_x\)\nb-action,
  then we get the range map \(r: G\to \base[G]\),
  cf.~Figure~\ref{fig:cont-of-measure}. In
  Figure~\ref{fig:cont-of-measure}, the vertical maps are the quotient
  maps.
  \begin{figure}[hbt]
    \centering
    \begin{tikzcd}
      G^x \times G^x
      \arrow[r, "r"'] \arrow[d, "q"]& G^x \arrow[d] \\
      G \arrow[r, "r"'] & \base[G]
    \end{tikzcd}
    \caption{} \label{fig:cont-of-measure}
  \end{figure}
  The range map of~\(G^x\times G^x\) carried the Haar
  system~\(\lambda_2\). One can notice that by construction the family
  of measures~\(\lambda_G\) along the range map of~\(G\) is the one
  derived from~\(\lambda_2\) from above quotient as in Lemma~1.3 (i)
  of~\cite{Renault1985Representations-of-crossed-product-of-gpd-Cst-Alg};
  the only \emph{twist} is that we used the groupoid~\(G^x\rtimes G\),
  that is isomorphic to the groupoid of trivial
  equivalence~\(G^x\times G^x\), for
  defining~\(\lambda_G\). Therefore, by virtue
  of~\cite[Lemma~1.3(i)]{Renault1985Representations-of-crossed-product-of-gpd-Cst-Alg},
  \(\lambda_G\) is a continuous family of measures.
\end{remark}

To formalise the idea behind Equation~\eqref{eq:haar-measure-for-1},
let \(x,w\in \base\) as before. Consider the left
translation~\(\LTr_\eta g\) for a function~\(g\in \Contc(G)\) by
\(\eta\in G_w^x\). That is, \(\LTr_\eta g \in \Contc(G^x)\) is defined
by \(\LTr_\eta g (\gamma) = g(\eta\inverse \gamma)\) for
\(\gamma\in G^x\). Choose \(C\subseteq G\) formed by collecting
\emph{exactly} one arrow~\(\eta\in G^x_v\) for the fixed
\(x\in \base[G]\) and ~\(v\) varying over~\(\base[G]\). Define the
translated measure~\(\LTr_{\eta\inverse}\lambda\) on~\(G^w\) by
\[
  \int_{G^w} f(\xi) \,\dd(\LTr_{\eta\inverse}\lambda)(\xi) \defeq
  \int_{G^x} \LTr_\eta f(\gamma) \, \dd\lambda(\gamma)
\]
where \(f\in \Contc(G)\).  Then
\(\lambda_G = \{\LTr_{\eta\inverse}\lambda\}_{\eta\in C}\).

\begin{proposition}
  \label{prop:formula-1}
  Let \(G\) be a locally compact locally, trivial groupoid. Let
  \(x\in \base[G]\). Assume that \(\lambda\) is a
  \(G_x^x\)\nb-invariant measure on the transversal~\(G^x\) for the
  left multiplication of~\(G_x^x\). Then
  \(\lambda_G \defeq \{\lambda_G^w\}_{w\in \base[G]}\) defined in
  Equation~\eqref{eq:haar-measure-for-1} is a Haar system on~\(G\).
\end{proposition}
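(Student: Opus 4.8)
The plan is to verify the two defining properties of a Haar system for the family $\lambda_G=\{\lambda_G^w\}_{w\in\base[G]}$ given by Equation~\eqref{eq:haar-measure-for-1}: (i) each $\lambda_G^w$ is a fully supported Radon measure on $G^w$; (ii) the family is continuous, i.e.\ $w\mapsto\lambda_G^w(g)$ lies in $\Contc(\base[G])$ for $g\in\Contc(G)$; and (iii) $G$\nb-equivariance for the left multiplication action. First I would recall that well-definedness of $\lambda_G^w$ (independence of the choice of $\eta\in G^x_w$) has already been checked in the discussion preceding the proposition, using the $G_x^x$\nb-invariance of $\lambda$; so the construction is legitimate and I only need to establish (i)--(iii).

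For (i): that $\lambda_G^w$ is a positive Radon measure is immediate since $g\mapsto\int_{G^x}g(\eta\inverse z)\,\dd\lambda(z)$ is a positive linear functional on $\Contc(G)$, and it is supported on $G^w$ because $\eta\inverse z$ ranges over $G^w$ as $z$ ranges over $G^x$ (here $\eta\in G^x_w$, so $s(\eta)=w$ and $r(\eta\inverse z)=s(\eta)=w$). Full support follows from the fact that $\lambda$ is fully supported on $G^x$ together with $z\mapsto\eta\inverse z$ being a homeomorphism $G^x\to G^w$: the pushforward of a fully supported measure under a homeomorphism is fully supported. This is essentially the observation recorded right after the statement of Theorem~\ref{thm:Haar-sys-general}.

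For (ii), the continuity: this is exactly what Remark~\ref{rem:for-cont-of-lambda-G} addresses, so I would invoke it. The argument is that $\lambda_G$ is, by construction, the family of measures along the range map $r\colon G\to\base[G]$ obtained by descending the Haar system $\lambda_2=\{\delta_y\times\lambda\}_{y\in G^x}$ on the groupoid of the trivial equivalence relation $G^x\times G^x$ through the quotient map $q\colon G^x\times G^x\to G\iso G_x^x\backslash(G^x\times G^x)$, as in the commuting Figure~\ref{fig:cont-of-measure}; continuity then follows from \cite[Lemme~1.3(i)]{Renault1985Representations-of-crossed-product-of-gpd-Cst-Alg}. The only subtlety, as noted in the remark, is the identification of $G^x\rtimes G$ with $G^x\times G^x$ via the kinetics $k$, which is a groupoid isomorphism because $G^x$ is a $G_x^x$-$G$\nb-equivalence; this identification does not affect the descended family.

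For (iii), $G$\nb-equivariance, fix $\gamma\in G$ with $s(\gamma)=w$, $r(\gamma)=w'$, and $g\in\Contc(G)$; I must show $\int_{G^{w}}g(\gamma\eta')\,\dd\lambda_G^{w}(\eta')=\int_{G^{w'}}g(\eta')\,\dd\lambda_G^{w'}(\eta')$. Pick $\eta\in G^x_{w}$; then $\eta\gamma\inverse\in G^x_{w'}$ is a valid choice of representative in computing $\lambda_G^{w'}$ by Equation~\eqref{eq:haar-measure-for-1}. Expanding the left side gives $\int_{G^x}g(\gamma\eta\inverse z)\,\dd\lambda(z)$; expanding the right side with the representative $\eta\gamma\inverse$ gives $\int_{G^x}g((\eta\gamma\inverse)\inverse z)\,\dd\lambda(z)=\int_{G^x}g(\gamma\eta\inverse z)\,\dd\lambda(z)$, and the two agree. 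The well-definedness already proved guarantees the right side does not depend on which representative of $G^x_{w'}$ we used, so this computation is legitimate. The main obstacle is really bookkeeping rather than depth: one must keep careful track of ranges and sources so that every concatenation of arrows is composable and lands in the correct transversal, and one must cite Remark~\ref{rem:for-cont-of-lambda-G} (hence Renault's lemma) at the right place for continuity rather than attempting a direct epsilon-delta argument. Once (i)--(iii) are in hand, $\lambda_G$ is a Haar system on $G$ by definition.
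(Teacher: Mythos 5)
Your proposal is correct and follows essentially the same route as the paper: full support via the homeomorphism \(z\mapsto\eta\inverse z\colon G^x\to G^w\), continuity by citing Remark~\ref{rem:for-cont-of-lambda-G}, and left-invariance by the same representative-swap computation (your \(\eta\gamma\inverse\in G^x_{w'}\) is exactly the paper's \(\eta\xi\inverse\in G^x_{r(\xi)}\), written without the \(\LTr\) notation). No gaps; the explicit appeal to the previously established independence of the choice of \(\eta\) is a welcome touch of care.
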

\begin{proof}
  \emph{Support condition:} Let \(\eta\in G_w^x\) as before. Then we
  note that the multiplication by~\(\eta\inverse\) is a
  homeomorphism~\(G^x \to G^w\) and~\(\LTr_{\eta\inverse}\lambda\) is
  the push forward of the measure~\(\lambda\) along this
  homeomorphism. As~\(\lambda\) has full support,
  \(\LTr_{\eta\inverse}\lambda\) has full support.

  \noindent \emph{Translation invariance:} This is a straightforward
  computation: let \(g\in \Contc(G)\) and \(\xi\in G\).  Then
  \[
    \int_G g(\xi \gamma) \,\dd\lambda_G^{s(\xi)}(\gamma) = \int_G
    \LTr_{\xi\inverse} g(\gamma) \,\dd\lambda_G^{s(\xi)}(\gamma)
    \defeq \int_{G^x} \LTr_{\eta}\LTr_{\xi\inverse} g(\gamma)
    \,\dd\lambda(\gamma)
  \]
  where \(\eta\in G_{s(\xi)}^x\). But
  \(\LTr_{\eta}\LTr_{\xi\inverse} = \LTr_{\eta\xi\inverse}\) and
  \(\eta\xi\inverse \in G_{r(\xi)}^x\). Therefore, the last term above
  equals
  \[
    \int_{G^x} \LTr_{\eta \xi\inverse} g(\gamma) \,\dd\lambda(\gamma)
    \defeq \int_{G} g(\gamma) \,\dd\lambda_G^{r(\xi)}(\gamma)
  \]
  which proves the invariance.

  Finally, the continuity of~\(\lambda_G\) is shown in
  Remark~\ref{rem:for-cont-of-lambda-G}.
\end{proof}

\subsubsection{Haar system in terms of measure on the space of units:}

Let \(G\) be a locally compact, locally trivial groupoid and let
\(\nu\) be a fully supported measure on~\(\base[G]\). We provide a
formula for the Haar system on~\(G\)
using~\(\nu\). Fix~\(x\in\base[G]\). Recall from the proof
of~\((3)\implies (2)\) in Theorem~\ref{thm:Haar-sys-general} that the
\(G_x^x\)\nb-invariant measure~\(\lambda\) that~\(\nu\) induces
on~\(G^x\) is~\(\nu\circ \mu\) where~\(\mu\) is the continuous family
of measures along the local
homeomorphism~\(s|_{G^x}\colon G^x \to \base[G]\); \(\mu\)~consisting
of atomic measures. Thus, for \(g\in \Contc(G^x)\),

\begin{equation}
  \label{eq:measure-lambda}
  \int_{G^x} g(\xi) \, \dd\lambda(\xi) \defeq \int_{\base[G]} \Big(\sum_{\gamma\in
    s|_{G^x}\inverse(w)} g(\gamma) \Big) \, \dd\nu(w).
\end{equation}

Thus, if \(f\in \Contc(G)\) and \(v\in \base[G]\), then, using
Equation~\eqref{eq:haar-measure-for-1}, the Haar
system~\(\lambda_G \defeq \{\lambda_G^w\}_{w\in \base[G]}\) on~\(G\)
is given by

\begin{equation}\label{eq:nu-to-haar-system}
  \int_G f(\xi) \, \dd\lambda_G^w(\xi) = \int_{G^x} \LTr_\eta f(\xi) \,
  \dd\lambda(\xi) = \int_{\base[G]} \Big(\sum_{\gamma\in
    s|_{G^x}\inverse(w)} f(\eta\inverse \gamma) \Big) \,
  \dd\nu(w)
\end{equation}
where \(\eta\in G_w^x\).

In particular, if \(G\) is the fundamental groupoid~\(\FGd(X)\) as in
Corollary~\ref{cor:Haar-sys-general} then
Equations~\eqref{eq:measure-lambda} and~\eqref{eq:nu-to-haar-system}
are respectively given as follows:
\begin{align*}
  \int_{\tilde{X}^x} g(\xi) \, \dd\lambda(\xi) &= \int_{\tilde{X}^x} \Big(\sum_{\gamma\in
                                                 {\phi}\inverse(w)}
                                                 g(\gamma) \Big) \,
                                                 \dd\nu(w)\\
  \int_{\FGd(X)} f(\xi) \, \dd\lambda_{\FGd(X)}^w(\xi) &= \int_{X} \Big(\sum_{\gamma\in
                                                         \phi\inverse(w)} f(\eta\inverse \gamma) \Big) \,
                                                         \dd\nu(w)
\end{align*}
where \(\tilde{X}^x = r\inverse(x)\) is the simply connected covering
space associated with~\(x\in X\), \(\eta \in \FGd(X)^x_w\), and
\(\phi\colon \tilde{X} \to X\) is the covering map.

\begin{proposition}\label{prop:cst-of-fgd}
  Let \(X\) be a path connected, locally path connected, semilocally
  simply connected, locally compact, Hausdorff and second countable
  space. Then~\(\FGd(X)\) carries a Haar system and
  \(\Cst(\FGd(X)) \iso \Cst(\FGp(X,x))\otimes \mathbb{K}\)
  where~\(\mathbb{K}\) is the \(\Cst\)\nb-algebra of compact operators
  on a separable Hilbert space.
\end{proposition}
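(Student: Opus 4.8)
The plan is to combine the existence result for the Haar system with the Muhly--Renault--Williams equivalence theorem. First I would invoke Corollary~\ref{cor:sec-cble-gpd-Haar-sym} (equivalently Theorem~\ref{thm:sec-cble-gpd-Haar-sym}): since $X$ is path connected, locally path connected, semilocally simply connected, locally compact, Hausdorff and second countable, the same is true of $\FGd(X)$ by \cite{Holkar-Hossain2023Top-FGd-I}, and hence $\FGd(X)$ carries a Haar system $\lambda_{\FGd(X)}$. So the first assertion is immediate, and I would only need to establish the $\Cst$-algebra isomorphism.

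For the second assertion I would fix a base point $x \in X$ and recall from Example~\ref{exa:fund-gpd} and Example~\ref{exa:fgd-loc-triv} that the isotropy group $\FGd(X)^x_x$ is exactly the fundamental group $\FGp(X,x)$, which is discrete (and countable, since $X$ is second countable), so that its Haar measure is the counting measure and $\Cst(\FGp(X,x))$ is the usual group $\Cst$-algebra. Next I would observe, as in Section~\ref{sec:equiv-group}, that the transversal $\FGd(X)^x = r^{-1}(x)$ — which is the universal cover $\tilde X$ — implements an equivalence of groupoids between the isotropy group $\FGp(X,x)$ and $\FGd(X)$; this is the locally trivial case of the fact recalled after Lemma~\ref{lem:equi-gpd-construction}, and all the spaces involved are locally compact, Hausdorff and second countable. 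Now the hypotheses of \cite[Theorem~3.1]{Muhly-Renault-Williams1987Gpd-equivalence} are met: two second countable, locally compact, Hausdorff, equivalent groupoids, each equipped with a Haar system, have Morita equivalent $\Cst$-algebras; and when one of the groupoids is a (discrete) group $\Gamma$ acting with the equivalence space having the additional structure making $\Gamma \backslash (\tilde X \times \tilde X) \iso \FGd(X)$ (Lemma~\ref{lem:equi-gpd-construction}), one gets the sharper statement $\Cst(\FGd(X)) \iso \Cst(\Gamma) \otimes \mathbb{K}(L^2(\text{transversal}))$. Concretely: a groupoid equivalence between $G$ and $H$ implemented by $Z$ yields $\Cst(G) \iso \Cst(H) \otimes \mathbb K$ whenever one side is the groupoid of a trivial equivalence relation on the equivalence space, and $\FGd(X)$ is, via Lemma~\ref{lem:equi-gpd-construction}, the quotient $\FGp(X,x)\backslash(\tilde X \times \tilde X)$ of the trivial equivalence relation groupoid on $\tilde X$. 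Feeding this through \cite[Theorem~3.1]{Muhly-Renault-Williams1987Gpd-equivalence} gives $\Cst(\FGd(X)) \iso \Cst(\FGp(X,x)) \otimes \mathbb K$, where $\mathbb K$ is the compacts on $L^2$ of the transversal $\tilde X$ with respect to the measure $\lambda$ of Equation~\eqref{eq:measure-lambda}; since $\tilde X$ is second countable this Hilbert space is separable.

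The step I expect to require the most care is the bookkeeping to check that the hypotheses of \cite[Theorem~3.1]{Muhly-Renault-Williams1987Gpd-equivalence} are literally satisfied — in particular verifying that the equivalence $\tilde X$ between $\FGp(X,x)$ and $\FGd(X)$ carries commuting free and proper actions with the two quotient conditions of Definition~\ref{def:gpd-equi}, and that the Haar systems match up correctly under the identification $\FGd(X) \iso \FGp(X,x)\backslash(\tilde X \times \tilde X)$ from Lemma~\ref{lem:equi-gpd-construction}. All of this is already assembled: the freeness and properness of the $\FGd(X)$-action on $\FGd(X)^x$ is part of the statement that $\FGd(X)^x$ is a transversal (Observation~\ref{obs:discrete-isotropy-loc-triv-gpd} and the discussion in Section~\ref{sec:equiv-group}), the discreteness of $\FGp(X,x)$ handles its side, and the Haar system compatibility is exactly what Proposition~\ref{prop:formula-1} and Remark~\ref{rem:for-cont-of-lambda-G} record. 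So the proof amounts to citing Corollary~\ref{cor:sec-cble-gpd-Haar-sym} for the existence of the Haar system and then quoting \cite[Theorem~3.1]{Muhly-Renault-Williams1987Gpd-equivalence} with $H = \FGp(X,x)$, $G = \FGd(X)$ and equivalence space $\tilde X = \FGd(X)^x$.
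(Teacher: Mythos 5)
Your proposal is correct and follows essentially the same route as the paper: the existence of the Haar system is exactly Corollary~\ref{cor:sec-cble-gpd-Haar-sym}, and the isomorphism \(\Cst(\FGd(X)) \iso \Cst(\FGp(X,x))\otimes \mathbb{K}\) is quoted directly from \cite[Theorem~3.1]{Muhly-Renault-Williams1987Gpd-equivalence}, applied to the transitive groupoid \(\FGd(X)\) with isotropy \(\FGp(X,x)\). The extra bookkeeping you supply (the equivalence implemented by the transversal \(\FGd(X)^x \iso \tilde X\) and the separability of the resulting Hilbert space) is consistent with, and merely elaborates on, the paper's two-line argument.
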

\begin{proof}
  Corollary~\ref{cor:sec-cble-gpd-Haar-sym} says that~\(\FGd(X)\) has
  a Haar system. And the isomorphism of \(\Cst\)\nb-algebras follow
  directly
  from~\cite[Theorem~3.1]{Muhly-Renault-Williams1987Gpd-equivalence}
\end{proof}

\begin{proposition}\label{prop:cst-of-fgd-of-gp}
  Let \(X\) be a locally compact, path connected, locally path
  connected and semilocally simply connected group. Let
  \(p\colon H\to X\) be its universal cover. Then \(\Cst(\FGd(X))\) is
  isomorphic to the crossed product \(H\ltimes_p \Contz(X)\).
\end{proposition}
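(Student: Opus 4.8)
The plan is to combine the description of $\FGd(X)$ from the first article with the classical identification of the $\Cst$\nb-algebra of a transformation groupoid of a group action with a crossed product. By Example~\ref{exa:fgd-of-gp}, the universal cover $p\colon H\to X$ exhibits $H$ as the universal covering \emph{group} of $X$, and $\FGd(X)$ is isomorphic, as a topological groupoid, to the transformation groupoid $H\ltimes X$ for the (left) action of $H$ on $X$ through $p$, namely $h\cdot y = p(h)\,y$. Since $X$ is locally compact and $p$ is a covering map, $H$ is a locally compact group, and Theorem~\ref{thm:Haar-measure-Gpd-of-Gp} together with Example~\ref{exa:haar-sys}(2) shows that a left Haar measure $\lambda_H$ on $H$ produces the Haar system $\{\delta_x\times\lambda_H\}_{x\in X}$ on $H\ltimes X$. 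Transporting this Haar system along the groupoid isomorphism, it suffices to prove $\Cst(H\ltimes X)\iso \Contz(X)\rtimes_\alpha H$, where $\alpha$ is the continuous action of $H$ on $\Contz(X)$ given by $(\alpha_h f)(y)=f(p(h)\inverse y)$; this crossed product is exactly what the symbol $H\ltimes_p\Contz(X)$ abbreviates.

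The core of the argument is to exhibit the isomorphism at the level of dense $*$\nb-subalgebras. As $\base[H]$ is a single point, the underlying space of $H\ltimes X$ is $H\times X$, and I would consider the linear map $\Phi\colon \Contc(H\times X)\to\Contc(H,\Contz(X))$ defined by $\Phi(f)(h)=f(h,\,\cdot\,)$ (allowing for the usual modular or inversion twist that the conventions of Example~\ref{exa:transormation-gpd} may force). One checks directly, by Fubini, that with the groupoid convolution and involution on $\Contc(H\times X)$ built from $\{\delta_x\times\lambda_H\}$ on one side and the $\alpha$\nb-twisted convolution and involution on $\Contc(H,\Contz(X))$ on the other, $\Phi$ is a $*$\nb-isomorphism of topological $*$\nb-algebras; this is precisely the classical computation identifying the groupoid $\Cst$\nb-algebra of a transformation groupoid of a group action with the corresponding crossed product, see~\cite{Renault1980Gpd-Cst-Alg} and~\cite{Muhly-Renault-Williams1987Gpd-equivalence}. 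Since $\Phi$ is isometric for the respective $\mathrm{L}^1$\nb-norms and matches bounded $*$\nb-representations on the two sides, it extends to an isomorphism of the full $\Cst$\nb-completions, and composing with $\FGd(X)\iso H\ltimes X$ yields the proposition.

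The one genuinely delicate point, and the step I would spell out most carefully, is the bookkeeping of \emph{conventions}: the right-to-left orientation of arrows adopted here (paths read from $1$ to $0$, so that $r$ and $s$ are evaluation at $1$ and $0$), the precise description of $H\ltimes X$ and its operations from Example~\ref{exa:transormation-gpd}, and the exact Haar system supplied by Theorem~\ref{thm:Haar-measure-Gpd-of-Gp}, all have to be aligned with whichever convention one fixes for $\Contz(X)\rtimes_\alpha H$. Depending on these choices the formula for $\Phi$ may require an inversion $h\mapsto h\inverse$ in the $H$\nb-variable or a modular factor $\Delta_H^{\pm 1/2}$; once the conventions are pinned down the verification is entirely routine and introduces no new ideas beyond the transformation-groupoid computation already cited.
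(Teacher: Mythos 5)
Your proposal is correct and takes essentially the same route as the paper: the paper's proof consists of citing Example~\ref{exa:fgd-of-gp} (the isomorphism \(\FGd(X)\iso H\ltimes X\)) and Theorem~\ref{thm:Haar-measure-Gpd-of-Gp} (the Haar system induced by the Haar measure on \(H\)), leaving implicit the classical identification of \(\Cst(H\ltimes X)\) with the crossed product, which is exactly the step you spell out via the map \(\Phi\colon\Contc(H\times X)\to\Contc(H,\Contz(X))\). Your additional bookkeeping of conventions is just detail the paper omits; there is no gap.
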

\begin{proof}
  Follows from Example~\ref{exa:fgd-of-gp} and
  Theorem~\ref{thm:Haar-measure-Gpd-of-Gp}.
\end{proof}

Proposition~\ref{prop:cst-of-fgd} and
Proposition~\ref{prop:cst-of-fgd-of-gp} together imply the following:
Assume \(p\colon H\to X\) as in
Proposition~\ref{prop:cst-of-fgd-of-gp}, plus that~\(X\) is second
countable. Let \(K\) be the kernel of~\(p\). Then \(K \iso \FGp(X,e)\)
where \(e\in X\) is the unit, see the remark after~\cite[Theorem 2.21]{Holkar-Hossain2023Top-FGd-I}. Then the crossed product
\(H\ltimes_p \Contz(X) \iso \Cst(K)\otimes \mathbb{K}\).

\medskip

\paragraph{\itshape Acknowledgement:} We are thank to Dheeraj Kulkarni
for many fruitful discussions. We thank SERB and CSIR for their
grants---SERB's SRG/2020/001823 grant to the first author and CSIR's
09/1020(0159)/2019-EMR-I grant to the second author---which supported
this work.

\begin{bibdiv}
  \begin{biblist}
		
    \bib{Bourbaki2004Integration-II-EN}{book}{ author={Bourbaki,
        Nicolas}, title={Integration. {II}. {C}hapters 7--9},
      series={Elements of Mathematics (Berlin)},
      publisher={Springer-Verlag, Berlin}, date={2004},
      ISBN={3-540-20585-3}, note={Translated from the 1963 and 1969
        French originals by Sterling K.  Berberian},
      review={\MR{2098271 (2005f:28001)}}, }
		
    \bib{Davidson1996Cst-by-Examples}{book}{ author={Davidson,
        Kenneth~R.}, title={{$C^*$}-{A}lgebras by example},
      series={Fields Institute Monographs}, publisher={American
        Mathematical Society, Providence, RI}, date={1996},
      volume={6}, ISBN={0-8218-0599-1}, review={\MR{1402012
          (97i:46095)}}, }
		
    \bib{Folland1995Harmonic-analysis-book}{book}{ author={Folland,
        Gerald~B.}, title={A course in abstract harmonic analysis},
      series={Studies in Advanced Mathematics}, publisher={CRC Press,
        Boca Raton, FL}, date={1995}, ISBN={0-8493-8490-7},
      review={\MR{1397028 (98c:43001)}}, }
		
    \bib{Holkar2017Composition-of-Corr}{article}{ author={Holkar,
        Rohit~Dilip}, title={Composition of topological
        correspondences}, date={2017}, journal={Journal of {O}perator
        {T}heory}, volume={78.1}, pages={89\ndash 117}, }
		
    \bib{Holkar-Hossain2023Top-FGd-I}{article}{ author={Holkar,
        Rohit~Dilip}, author={Hossain, Md~Amir}, title={Topological
        fundamental groupoid.{I}.}, date={2023}, note={Preprint-
        arXiv:2302.01583v1}, }
		
    \bib{Holkar-Hossain2023Top-FGd-II}{article}{ author={Holkar,
        Rohit~Dilip}, author={Hossain, Md~Amir}, author={Kulkarni,
        Dheeraj}, title={Topological fundamental groupoid. {II}. {A}n
        action category of the fundamental groupoid}, date={2023},
      note={Preprint- arXiv:2305.04668}, }
		
    \bib{Kumjian-Muhly-Renault1998Brauer-gp-of-lc-gpd}{article}{
      author={Kumjian, Alexander}, author={Muhly, Paul~S.},
      author={Renault, Jean~N.}, author={Williams, Dana~P.},
      title={The {B}rauer group of a locally compact groupoid},
      date={1998}, ISSN={0002-9327}, journal={Amer. J. Math.},
      volume={120}, number={5}, pages={901\ndash 954},
      url={http://muse.jhu.edu/journals/american_journal_of_mathematics/v120/120.5kumjian.pdf},
      review={\MR{1646047 (2000b:46122)}}, }
		
    \bib{Muhly-Renault-Williams1987Gpd-equivalence}{article}{
      author={Muhly, Paul~S.}, author={Renault, Jean~N.},
      author={Williams, Dana~P.}, title={Equivalence and isomorphism
        for groupoid {$C^\ast$}-algebras}, date={1987},
      ISSN={0379-4024}, journal={J. Operator Theory}, volume={17},
      number={1}, pages={3\ndash 22}, review={\MR{873460
          (88h:46123)}}, }
		
    \bib{Renault1980Gpd-Cst-Alg}{book}{ author={Renault, Jean},
      title={A groupoid approach to {$C^{\ast} $}-algebras},
      series={Lecture Notes in Mathematics}, publisher={Springer,
        Berlin}, date={1980}, volume={793}, ISBN={3-540-09977-8},
      review={\MR{584266 (82h:46075)}}, }
		
    \bib{Renault1985Representations-of-crossed-product-of-gpd-Cst-Alg}{article}{
      author={Renault, Jean}, title={Repr\'esentation des produits
        crois\'es d'alg\`ebres de groupo\"\i des}, date={1987},
      ISSN={0379-4024}, journal={J. Operator Theory}, volume={18},
      number={1}, pages={67\ndash 97}, review={\MR{912813
          (89g:46108)}}, }
		
    \bib{Rudin1987Real-And-Complex}{book}{ author={Rudin, Walter},
      title={Real and complex analysis}, edition={Third},
      publisher={McGraw-Hill Book Co., New York}, date={1987},
      ISBN={0-07-054234-1}, review={\MR{924157 (88k:00002)}}, }
		
    \bib{Tu2004NonHausdorff-gpd-proper-actions-and-K}{article}{
      author={Tu, Jean-Louis}, title={Non-{H}ausdorff groupoids,
        proper actions and {$K$}-theory}, date={2004},
      ISSN={1431-0635}, journal={Doc. Math.}, volume={9},
      pages={565\ndash 597 (electronic)}, review={\MR{2117427
          (2005h:22004)}}, }
		
    \bib{Williams2016Haar-systems-on-equivalent-gpds}{article}{
      author={Williams, Dana~P.}, title={Haar systems on equivalent
        groupoids}, date={2016},
      journal={Proc. Amer. Math. Soc. Ser. B}, volume={3},
      pages={1\ndash 8}, url={https://doi.org/10.1090/bproc/22},
      review={\MR{3478528}}, }
		
    \bib{Williams2019A-Toolkit-Gpd-algebra}{book}{ author={Williams,
        Dana~P.}, title={A tool kit for groupoid {$C^*$}-algebras},
      series={Mathematical Surveys and Monographs},
      publisher={American Mathematical Society, Providence, RI},
      date={2019}, volume={241}, ISBN={978-1-4704-5133-2},
      url={https://doi.org/10.1016/j.physletb.2019.06.021},
      review={\MR{3969970}}, }
		
  \end{biblist}
\end{bibdiv}

\end{document}